\newtheorem{Theorem}{Theorem}
\newtheorem{Lemma}{Lemma}
\newtheorem{Proposition}{Proposition}
\newtheorem{Remark}{Remark}
\def\o{\Omega}
\newcommand{\real}{\mathbb{R}}
\newcommand{\tq}{\tilde{q}}
\newcommand{\tu}{\tilde{u}}
\newcommand{\tv}{\tilde{v}}
\newcommand{\calF}{\mathcal{F}}
\newcommand{\dif}{\,\mathrm{d}}
\newcommand{\di}{\mathrm{div}\,}
\newcommand{\p}{\partial}
\newcommand{\ds}{\displaystyle}
\def\C{\mathrm{curl}\,}
\def\D{\mathrm{div}\,}
\begin{document}
\title{ Convergence of the 2D Euler-$\alpha$ to Euler equations in the Dirichlet case: indifference to boundary layers\footnote{Email: mlopes@im.ufrj.br (M.C. Lopes Filho), hlopes@im.ufrj.br (H.J. Nussenzveig Lopes), etiti@math.uci.edu(E.S. Titi), }}
\author{  Milton C. Lopes Filho$^{\lowercase{a}}$,  Helena J. Nussenzveig Lopes$^{\lowercase{a}}$, \\ Edriss S. Titi$^{b,c}$,  Aibin Zang$^{\lowercase{a,d}\thanks{Corresponding author: Email: zangab05@126.com (A.B. Zang). }}$}
\date{March 20, 2014}
\maketitle

\begin{center}

$^a$ Instituto de Matem\'atica \; -- \;
Universidade Federal do Rio de Janeiro, \\
Cidade Universit\'aria -- Ilha do Fund\~ao, \;\;
Caixa Postal 68530,\\
21941-909 Rio de Janeiro, RJ -- Brasil.\\
$b$ Department of Computer Science and Applied Mathematics\\
Weizmann Institute of Science,
Rehovot, 76100, Israel.\\
$c$ Department of Mathematics and Department
of Mechanical and Aerospace Engineering \\
University of California,
Irvine, California 92697, USA.\\

 $^d$  Department of Mathematics, Yichun University, Yichun, Jiangxi, 336000, P.R.China\\

\end{center}

\begin{abstract}

In this article we consider the Euler-$\alpha$ system as a regularization of the incompressible Euler equations in a smooth, two-dimensional,
bounded domain. For the limiting Euler system we consider the usual non-penetration boundary condition, while, for the Euler-$\alpha$  regularization, we use velocity vanishing at the boundary. We also assume that the initial velocities for the Euler-$\alpha$ system approximate, in a suitable sense, as the regularization parameter $\alpha \to 0$, the initial velocity for the limiting Euler system. For small values of $\alpha$, this situation leads to a boundary layer, which is the main concern of this work. Our main result is that, under appropriate regularity assumptions, and despite the presence of this boundary layer, the solutions of the Euler-$\alpha$ system converge, as $\alpha \to 0$, to the corresponding solution of the Euler equations, in $L^2$ in space, uniformly in time.  We also present an example involving parallel flows, in order to illustrate the indifference to the boundary layer of the $\alpha \to 0$ limit, which underlies our work.

 \textbf{Keywords}: Euler-$\alpha$ equations; Euler equations; boundary layer; homogeneous Dirichlet boundary conditions.

\textbf{Mathematics Subject Classification(2000)}: 35Q30; 76D05, 76D10.
\end{abstract}
\numberwithin{equation}{section}

\numberwithin{equation}{section}

%
%\section{Momentum equation and equivalent form}

\section{Introduction}

Let $\Omega \subset \real^2$ be a bounded, simply connected domain, with smooth boundary $\partial \Omega$.
We denote by $\widehat{n}$ the exterior normal vector to $\partial \Omega$. We consider the initial-boundary-value problem for the Euler-$\alpha$ system in $\Omega$, with initial data $u_0^{\alpha} \in H^3(\o)$,  given by:

\begin{equation} \label{2dalphaeuler}
\left\{ \begin{array}{cl} \partial_t v^\alpha + u^\alpha \cdot \nabla v^\alpha + \displaystyle{\sum_{j=1}^2 } v^\alpha_j \nabla u^\alpha_j = - \nabla p^\alpha & \mbox{ in }~~ \o \times (0,\infty),\\[3mm]
\D u^\alpha = 0 & \mbox{ in }~~\o\times [0,\infty),\\[3mm]
u^\alpha = 0 \mbox{ on } \partial \Omega & \mbox{ on }~~\partial\o \times [0,\infty),\\[3mm]
u^\alpha(x,0) = u_0^{\alpha} & \mbox{ in } \o,
 \end{array} \right.
\end{equation}
where $v^\alpha = u^\alpha - \alpha^2 \Delta u^\alpha$.

Existence and uniqueness of a solution for  problem \eqref{2dalphaeuler} was established, by using geometric tools,   in \cite{MRS2000,S2000} for initial data $u_0^{\alpha} \in H^s(\Omega)$, $s>2$. Moreover, it is also remarked in the end of section 1 of  \cite{bu} that the global regularity of the two-dimensional Euler-$\alpha$ system \eqref{2dalphaeuler} follows clearly from \cite{ce}. Specifically, it is observed that, for fixed $\alpha$, the solutions of the viscous second-grade fluid established in \cite{ce} converges to the solution of \eqref{2dalphaeuler}, as the viscosity tends to zero. This in turn provides a direct traditional PDE proof for the global regularity of \eqref{2dalphaeuler}. We also remark that one can apply the abstract existence theorem of \cite{katolai1984} to \eqref{2dalphaeuler} in order to show the existence and uniqueness of solutions to \eqref{2dalphaeuler}.

Fix $u_0 \in H^3(\Omega)$, a divergence-free vector-field satisfying $u_0 \cdot \hat{n} = 0$ on $\partial\o$. We write the initial-boundary-value problem for the incompressible two-dimensional Euler equations in $\o$, with initial velocity $u_0$, as

\begin{equation} \label{2deuler}
\left\{ \begin{array}{cl} \partial_t \bar{u} + \bar{u} \cdot \nabla\bar{u} = - \nabla\bar{ p} & \mbox{ in }~~ \o \times (0,\infty),\\[3mm]
\D\bar{u} = 0 & \mbox{ in }~~ \o \times [0,\infty),\\[3mm]
\bar{u} \cdot \widehat{n} = 0 & \mbox{ on }~~ \partial \Omega \times [0,\infty),\\[3mm]
\bar{u}(x,0) = u_0 &\mbox{ in }~~ \o.
 \end{array} \right.
\end{equation}

Existence and uniqueness of a solution $\bar{u} \in C([0,\infty);(H^3(\Omega))^2)$ for \eqref{2deuler} can be found in \cite{katolai1984} (see also \cite{te1}) and references therein. Clearly, we also have $\bar{u} \in C^1([0,\infty);(H^2(\Omega))^2)$.

Next, we consider a family of initial data for the Euler-$\alpha$ system, $\{u_0^{\alpha}\} \subset H^3(\Omega)$, corresponding to $u_0$,  satisfying the following conditions:

\begin{equation} \label{E1}
%\tag{E1}
\begin{array}{ll}
(\mathrm{i})  & u_0^{\alpha}~\mbox{ vanishes on}~ \partial \Omega,\\
(\mathrm{ii}) & u_0^\alpha \to u_0,~\mbox{ as} ~\alpha \to 0, \mbox{ in }~L^2(\Omega),\\
(\mathrm{iii}) & \|\nabla u_0^{\alpha}\|_{L^2}  = o(\alpha^{-1}),~ \mbox{ as } ~\alpha \to 0,~\mbox{ and}\\
(\mathrm{iv}) & \|u_0^{\alpha}\|_{H^3} = \mathcal{O}(\alpha^{-3}),~ \mbox{ as } ~\alpha \to 0.
\end{array}
\end{equation}

We call a family $\{u_0^{\alpha}\}$ satisfying \eqref{E1}  a {\em suitable family of approximations} to $u_0$.

Fix $T>0$ and let $u^{\alpha} \in C([0,T],(H^3(\o))^2\cap V) $ be the unique solution of \eqref{2dalphaeuler} with initial velocity $u_0^{\alpha}$, established,  e.g., by Theorem 2 in \cite{S2000} (see also earlier remarks concerning  \cite{bu} and \cite{ce}, and \cite{katolai1984}). In section 4, we present and prove the main result of the present article, namely that if $u^\alpha$ denotes the solution to \eqref{2dalphaeuler} with initial data $u_0^\alpha$ satisfying \eqref{E1}, then the sequence $\{u^{\alpha}\}$ converges, in $C([0,T]; (L^2(\Omega))^2)$, to the solution of \eqref{2deuler} with initial velocity $u_0$.
In \cite{kato} T. Kato introduced a criterion for the convergence of solutions of the incompressible Navier-Stokes equations, with no-slip boundary conditions, at the limit of vanishing viscosity, to solutions of the incompressible Euler equations with non-penetration boundary conditions. The proof of our main result, Theorem \ref{mainth}, borrows some ideas from \cite{kato}.

Using the eigenfunctions of the Stokes operator in the domain $\Omega$, we prove in section 5 that, for a given $u_0 \in (H^3(\Omega))^2$, with $\mbox{div }u_0=0$ and  $u_0 \cdot \hat{n} = 0$ on $\partial \Omega$, there exists a suitable family of approximations $\{u_0^{\alpha}\}$ to $u_0$.
Thus, using Theorem \ref{mainth}, we have that any smooth enough solution of Euler equations \eqref{2deuler}, with initial data $u_0$, can be approximated by a solution of \eqref{2dalphaeuler} in the $C([0,T];L^2(\o))-$norm.

In section 5 we also present  an example which illustrates the possible boundary layer behaviors of the $\alpha \to 0$ limit.

The Euler-$\alpha$ system \eqref{2dalphaeuler} was introduced as an {\it ad hoc} regularization of the incompressible Euler system, see \cite{hmr,bt}, and was later shown to have deep geometrical significance, as the Euler-Lagrange equations for geodesics on the group of volume-preserving diffeomorphisms with the right-invariant metric inherited from $H^1$, see \cite{MRS2000,S2000}. In addition, the Euler-$\alpha$ system corresponds to setting viscosity to zero in the second-grade fluid equations, which is a well-known non-Newtonian fluid model, see \cite{df}. Moreover, in the three-dimensional case the Euler-$\alpha$ system inspired the introduction of the Navier-Stokes-$\alpha$ and Leray-$\alpha$ viscous models, which turned out to be remarkable sub-grid scale models of turbulence (see, e.g., \cite{Cao,cfh1,cfh2,cfh3,cheskidov,fht1,fht2}, and references therein).

There has been substantial work on the Euler-$\alpha$ system. In the full plane, well-posedness has been studied under different regularity assumptions, see \cite{blt1,blt,bu,OS2001}. Also, the vanishing viscosity limit of second-grade fluids to Euler-$\alpha$ was established in \cite{bu} and the limit $\alpha \to 0$ of Euler-$\alpha$ to Euler was investigated in \cite{blt1,blt, lt}. In domains with boundary, besides the non-penetration condition $u\cdot \widehat{n}=0$, the Euler-$\alpha$ system requires additional boundary conditions, but there is no natural choice for them, either on physical or geometric grounds. There are two different kinds of boundary conditions considered in the literature: Navier-type slip conditions and homogeneous  Dirichlet  boundary conditions (no-slip). Existence and uniqueness of solutions to the Euler-$\alpha$ system in a bounded domain, under Navier conditions was established in \cite{BR2003, S2000}. The limit as $\alpha \to 0$ of second-grade fluids to the Navier-Stokes equations was studied, for flow in a bounded domain with Dirichlet boundary conditions, in \cite{iftimie, ct}. As mentioned earlier, in \cite{bu}, it was remarked that the uniform estimates, with respect to the viscosity, that have been established in \cite{ce} will easily imply the convergence of the solutions of the second-grade fluid equations, as the viscosity $\nu\to 0$, and fixed $\alpha$, to the corresponding unique solutions of the Euler-$\alpha$ equation under homogeneous  Dirichlet  boundary conditions.  In \cite{twocouples}, the independent limits of second-grade fluids, as $\alpha \to 0$ or $\nu \to 0$, were studied for flows in a bounded domain with Navier-type boundary conditions. In all singular limits studied, in the presence of boundaries, the difficulty of dealing with a boundary layer was avoided. The main purpose of the present work is  to address precisely this difficulty.

The $\alpha$--regularization, under homogeneous Dirichlet boundary conditions, as considered here, has two advantages: (a) it is particularly simple and (b) it formally resembles the effect of viscosity. However, our analysis ends up highlighting the sharp contrast between small viscosity, in the context of the Navier-Stokes equations, and small $\alpha$, in the context of the Euler-$\alpha$ equations, in the presence of rigid boundaries.
The initial objective of the present investigation was to obtain a version of the Kato criterion in the vanishing $\alpha$ limit. The convergence which we obtained here was unexpected, and it certainly appears in other contexts, such as the three-dimensional case, combining small $\alpha$ and small viscosity in case of second-grade fluid (cf. \cite{LNTZ}), or by considering other $\alpha$-type regularizations of the ideal flow equations. We chose to focus, in this article, on the simplest case in order to provide an accessible baseline for future research.

The remainder of this paper is organized as follows. In section 2, we will introduce notation, present some preliminary results and write the vorticity formulation of \eqref{2dalphaeuler}. In section 3, we include a proof of global existence and uniqueness of a weak solution for \eqref{2dalphaeuler}. Although this result can be found explicitly in \cite{MRS2000,S2000} (or indirectly in \cite{katolai1984}, or in \cite{bu} combined with \cite{ce}),  we require, for our main result, some explicit estimates, which are derived in the proof of Theorem \ref{wellposed}. In section 4, we obtain, for any $T \in (0,\infty)$, the convergence of solutions of the Euler-$\alpha$ equations to  solutions of the Euler equations, as $\alpha\to 0$, in $C([0,T]; (L^2(\o))^2)$, assuming that the initial data for the Euler-$\alpha$ system is a suitable family of approximation to the initial data for the Euler equations. In section 5, we describe a method for constructing a suitable family of approximations for a given initial velocity $u_0$ of Euler equation \eqref{2deuler}.  We also present a class of examples illustrating the boundary layer behavior of the small
$\alpha$ approximation and we discuss some directions for future research.

\section{Notations and preliminaries}
In this section, we introduce notation and we present the vorticity formulation of the Euler-$\alpha$ system.

We use the notation $H^{m}(\o)$ for the usual $L^2$-based Sobolev spaces of order $m$, with the norm $\|\cdot\|_m$ and the scalar product
$(\cdot,\cdot)_m$. For the case $m=0$, $H^0(\o)=L^2(\o)$; we denote the corresponding norm by $\|\cdot\|$ and the inner product by $(\cdot,\cdot)$.
We denote by $C^{\infty}_c(\o)$ the space of smooth functions, compactly supported in $\o$, and  by $H^m_0(\o)$ the closure of $C^{\infty}_c(\o)$ under the $H^m$-norm.

We also make use of the following notation:
\begin{equation*}
\begin{aligned}
&\left[u,v\right]=(\nabla u,\nabla v)+\alpha^2(\Delta u,\Delta v), ~\mbox{for all}~ u,v\in C_c^\infty(\Omega),\\
&H=\{u\in (L^2(\Omega))^2: \D u=0~\mbox{in}~\Omega,~ u\cdot \hat{n}=0~\mbox{on}~\p\Omega\},\\
&V=\{u\in (H^1_0(\Omega))^2: \D u=0~\mbox{in}~\Omega\},\\
&\dot{H}^1=\{\pi\in H^1(\o): \int_\Omega \pi\, \dif x=0\}.
\end{aligned}
\end{equation*}
It is easy to see that, for each fixed $\alpha>0$, $[\cdot,\cdot]$ gives rise to an inner product on $H^2_0(\o)$ and that the
corresponding norm is equivalent to the usual $H^2$-norm, restricted to $H^2_0(\Omega)$.

Let $u =(u_1,u_2) \in V$. Then
\[\C u \equiv \partial_{x_1} u_2 - \partial_{x_2} u_1 = \nabla^{\perp} \cdot u,\] where
$\nabla^{\perp} = (-\partial_{x_2},\partial_{x_1})$.

Hereafter we use $C$ for constants that, in principle, depend on $\alpha$, and $K$ for those that are independent of $\alpha$.

The following results can be found, for example, in \cite{Galdi}.

\begin{Lemma} \label{InvertCurl}
Let $\phi\in  H^m(\o), m\ge0$. Then there exists a unique divergence-free vector field $\Psi\in (H^{m+1}(\o))^2$ with $\Psi\cdot \hat{n}=0$ on $\partial \o$,
such that
\begin{equation*}
\begin{aligned}
&\C \Psi=\phi,\\
&\|\Psi\|_{m+1}\le K\|\phi\|_{m},
\end{aligned}
\end{equation*}
for some constant $K>0$, which depends only on $m$ and $\Omega$.
\end{Lemma}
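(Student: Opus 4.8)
The plan is to reduce the problem to a Dirichlet problem for the Laplacian via a stream function. Since $\Omega$ is simply connected, I would seek $\Psi$ in the form $\Psi=\nabla^{\perp}\psi$ for a scalar stream function $\psi$, which automatically guarantees $\D\Psi=0$. With this ansatz a direct computation gives $\C(\nabla^{\perp}\psi)=\partial_{x_1}^2\psi+\partial_{x_2}^2\psi=\Delta\psi$, so the equation $\C\Psi=\phi$ becomes the Poisson equation $\Delta\psi=\phi$ in $\Omega$. Moreover $\Psi\cdot\hat{n}=\nabla^{\perp}\psi\cdot\hat{n}$ equals, up to sign, the tangential derivative of $\psi$ along $\partial\Omega$, so the constraint $\Psi\cdot\hat{n}=0$ amounts to requiring $\psi$ to be constant on $\partial\Omega$; because $\partial\Omega$ is connected (as $\Omega$ is simply connected), this constant is the same on all of $\partial\Omega$ and may be normalized to zero. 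Thus I am led to the Dirichlet problem $\Delta\psi=\phi$ in $\Omega$, $\psi=0$ on $\partial\Omega$.

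Next I would invoke the classical existence and elliptic regularity theory for the Dirichlet problem on the smooth bounded domain $\Omega$ (among the results referenced in \cite{Galdi}): for $\phi\in H^m(\Omega)$ there is a unique $\psi\in H^{m+2}(\Omega)\cap H^1_0(\Omega)$ with $\|\psi\|_{m+2}\le K\|\phi\|_m$, where $K$ depends only on $m$ and $\Omega$. The lower-order $L^2$ term that usually appears on the right-hand side of such estimates is absorbed using $\|\psi\|\le K\|\phi\|$, which follows from the energy identity $\|\nabla\psi\|^2=-(\phi,\psi)$ together with the Poincar\'e inequality. Setting $\Psi=\nabla^{\perp}\psi$ then produces a divergence-free field in $(H^{m+1}(\Omega))^2$ with $\C\Psi=\phi$, $\Psi\cdot\hat{n}=0$ on $\partial\Omega$, and $\|\Psi\|_{m+1}\le\|\psi\|_{m+2}\le K\|\phi\|_m$, which establishes existence together with the asserted bound.

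For uniqueness, suppose $\Psi_1,\Psi_2$ are two such fields and set $W=\Psi_1-\Psi_2$. Then $W$ is divergence-free with $W\cdot\hat{n}=0$ on $\partial\Omega$, so by the same reasoning as above $W=\nabla^{\perp}g$ for a stream function $g$ normalized to vanish on $\partial\Omega$. Since $\Delta g=\C W=\C\Psi_1-\C\Psi_2=\phi-\phi=0$, the function $g$ is harmonic with zero Dirichlet data, hence $g\equiv 0$ and therefore $W=0$, i.e. $\Psi_1=\Psi_2$.

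The only genuinely nontrivial input is the elliptic regularity estimate for the Dirichlet Laplacian on a smooth bounded domain, which is standard; the remaining steps are routine verifications. I expect the points requiring mild care to be the correct reading of the boundary condition $\Psi\cdot\hat{n}=0$ as ``$\psi$ constant on $\partial\Omega$'' and the systematic use of simple connectivity, both to fix the stream function in the existence step and to rule out nontrivial divergence-free, curl-free fields with zero normal trace in the uniqueness step.
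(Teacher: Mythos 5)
Your proof is correct. Note that the paper itself does not prove this lemma at all: it is stated as a known result and attributed to the literature (the reference \cite{Galdi}, Galdi--Sequeira), so there is no internal argument to compare against; what you have done is supply the standard self-contained proof that such references rely on. Your route — the ansatz $\Psi=\nabla^{\perp}\psi$, the computation $\C(\nabla^{\perp}\psi)=\Delta\psi$, the identification of the condition $\Psi\cdot\hat{n}=0$ with $\psi$ being (a normalizable) constant on the connected boundary, and then classical existence and $H^{m+2}$ elliptic regularity for the Dirichlet problem with the lower-order term absorbed via the energy identity and Poincar\'e — is exactly the canonical argument in two dimensions, and it is consistent with how the paper uses the lemma elsewhere (e.g.\ in Lemma \ref{ExistStream} the solution is likewise represented as $\nabla^{\perp}\phi$ of a stream function). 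Your uniqueness step is also handled properly: you correctly invoke simple connectivity twice, once to produce the stream function $g$ for the difference field $W$ and once (via connectedness of $\partial\Omega$) to normalize $g$ to vanish on the boundary, after which $\Delta g=0$ with zero Dirichlet data forces $g\equiv 0$; this is the step that genuinely fails in multiply connected domains, where nontrivial harmonic fields with zero normal trace exist, so flagging it was the right instinct.
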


Next we introduce the potential vorticity. Given $u=(u^1,u^2)$ a solution of the Euler-$\alpha$ system \eqref{2dalphaeuler},  the associated potential vorticity $q$ is defined by
$$q \equiv \C (u - \alpha^2 \Delta u) = \partial_{x_1} (u^2 - \alpha^2 \Delta u^2) - \partial_{x_2}(u^1 - \alpha^2\Delta u^1).$$

We apply the curl operator to the first equation in \eqref{2dalphaeuler}  and, after a straightforward calculation, we obtain the vorticity
formulation of the Euler-$\alpha$ equations:
\begin{equation}\label{eq8}
\left\{\begin{array}{cl} \partial_t q + u\cdot\nabla q = 0 , &\mbox{in}~~\Omega\times (0, \infty) ,\\[2mm]
\D u= 0, &\mbox{in}~~\Omega\times [0, \infty), \\[2mm]
\C (u - \alpha^2 \Delta u) = q, &\mbox{in}~~\o\times[0,\infty)\\[2mm]
u=0, &\mbox{on}~~\partial \o \times [0,\infty)\\[2mm]
q(\cdot,0)=\C(u_0-\alpha^2\Delta u_0)=q_0&\mbox{in}~~\Omega.
\end{array}\right.
\end{equation}

Assume $(u,q)$ is a solution of \eqref{eq8}. Let us introduce the stream function $\phi$, such that $u=\nabla^{\perp}\phi=(-\phi_{x_2},\phi_{x_1})$.
After appropriately fixing an additive constant, it is easy to see that $\phi$ satisfies the elliptic problem:
 \begin{equation}\label{eq10}
\left\{\begin{array}{cl}\Delta \phi-\alpha^2\Delta^2\phi=q, &\mbox{in}~~\Omega\\[3mm]
\phi=\frac{\p\phi}{\p\hat{n}}=0, &\mbox{on}~~\p\Omega.
\end{array}\right.
\end{equation}

\begin{Lemma} \label{ExistStream}
Let $q\in L^2(\Omega)$.  There exists a unique solution $\phi \in H_0^2(\Omega)$ of (\ref{eq10}), in the following sense:
\begin{equation}\label{eq11}
\left[\phi,\psi\right]= (-q,\psi), ~\mbox{for any }\psi\in H_0^2(\Omega).
\end{equation}
Furthermore, the solution operator $q \mapsto \phi$ maps $L^2(\o)$ continuously into  $H^4(\o) \cap H^2_0(\o)$.
 \end{Lemma}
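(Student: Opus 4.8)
The plan is to obtain existence and uniqueness from the Lax--Milgram (equivalently, Riesz representation) theorem applied to the form $[\cdot,\cdot]$ on $H_0^2(\Omega)$, and then to promote the resulting weak solution to $H^4$ by elliptic regularity. First I would verify that \eqref{eq11} fits the variational framework: as already noted above, for fixed $\alpha>0$ the form $[\cdot,\cdot]$ is an inner product on $H_0^2(\Omega)$ whose induced norm is equivalent to $\|\cdot\|_2$, so it is bounded and coercive there; moreover $\psi\mapsto(-q,\psi)$ is a bounded linear functional on $H_0^2(\Omega)$ because $|(-q,\psi)|\le\|q\|\,\|\psi\|\le\|q\|\,\|\psi\|_2$. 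The Riesz representation theorem then furnishes a unique $\phi\in H_0^2(\Omega)$ satisfying \eqref{eq11}, together with the estimate $\|\phi\|_2\le C\|q\|$, with $C=C(\alpha,\Omega)$.

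The key step toward regularity is to recast \eqref{eq11} as a clamped biharmonic problem. Integrating by parts the first term (legitimate since $\phi\in H^2(\Omega)$ and $\psi\in H_0^2(\Omega)\subset H_0^1(\Omega)$) gives $(\nabla\phi,\nabla\psi)=-(\Delta\phi,\psi)$, so \eqref{eq11} becomes
\begin{equation*}
(\Delta\phi,\Delta\psi)=\Big(\tfrac{1}{\alpha^2}(\Delta\phi-q),\,\psi\Big),\qquad \text{for all } \psi\in H_0^2(\Omega).
\end{equation*}
This is exactly the weak formulation of $\Delta^2\phi=g$ with the clamped boundary conditions $\phi=\partial\phi/\partial\hat n=0$, where the datum $g:=\alpha^{-2}(\Delta\phi-q)$ already lies in $L^2(\Omega)$ since $\phi\in H^2(\Omega)$ and $q\in L^2(\Omega)$. (Integrating by parts the second term as well recovers \eqref{eq10} in the distributional sense, confirming that \eqref{eq11} is indeed the announced notion of weak solution.)

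I would then invoke the classical $L^2$ elliptic theory for the biharmonic operator under Dirichlet (clamped-plate) boundary conditions on the smooth domain $\Omega$: these boundary conditions satisfy the complementing (Lopatinski--Shapiro) condition, so by Agmon--Douglis--Nirenberg the weak solution with right-hand side $g\in L^2(\Omega)$ belongs to $H^4(\Omega)\cap H_0^2(\Omega)$ and obeys $\|\phi\|_4\le C(\|g\|+\|\phi\|)$. Combining $\|g\|\le\alpha^{-2}(\|\phi\|_2+\|q\|)$ with the $H^2$ bound from the first step yields $\|\phi\|_4\le C(\alpha,\Omega)\|q\|$, which is the asserted continuity of $q\mapsto\phi$ from $L^2(\Omega)$ into $H^4(\Omega)\cap H_0^2(\Omega)$. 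I expect the main obstacle to be precisely this last step: a priori the variational solution is only known to lie in $H_0^2(\Omega)$, and upgrading it to $H^4$ \emph{up to the boundary} is where the full elliptic-regularity machinery (boundary difference quotients, or the ADN a priori estimates) is genuinely used; once the right-hand side has been rewritten as an $L^2$ datum for the biharmonic operator, as above, the conclusion is classical and may be quoted from the references.
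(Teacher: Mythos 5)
Your proof is correct, and its existence--uniqueness step (Lax--Milgram applied to $[\cdot,\cdot]$ on $H_0^2(\Omega)$) coincides with the paper's. The regularity step, however, takes a genuinely different route. You rewrite \eqref{eq11} as the clamped-plate problem $\Delta^2\phi = g$ with datum $g = \alpha^{-2}(\Delta\phi - q)\in L^2(\Omega)$ and invoke Agmon--Douglis--Nirenberg/polyharmonic regularity for the biharmonic Dirichlet problem. The paper instead stays within second-order elliptic theory: it uses Lemma \ref{InvertCurl} to produce a divergence-free $\Phi \in (H^1(\Omega))^2$ with $\C\Phi = q$, observes that $\C\bigl(-\alpha^2\Delta(\nabla^{\perp}\phi) - (\Phi - \nabla^{\perp}\phi)\bigr) = 0$, uses the simple-connectedness of $\Omega$ to introduce a pressure $\pi$, and thereby recasts $\nabla^{\perp}\phi$ as the solution of the Stokes system \eqref{eq12} with forcing $f = \alpha^{-2}(\Phi - \nabla^{\perp}\phi) \in H^1$, to which it applies Stokes regularity from Galdi's book. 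Your route is more direct and self-contained in structure: it needs neither the curl-inversion lemma, nor the auxiliary pressure, nor simple-connectedness of $\Omega$; its cost is the appeal to fourth-order elliptic theory, including the nontrivial upgrade of the weak $H_0^2$ solution to $H^4$ up to the boundary, which you correctly identify as the crux. What the paper's route buys is explicit bookkeeping in $\alpha$: it yields $\|\phi\|_4 \le K\alpha^{-3}\|q\|$ with $K$ independent of $\alpha$ (estimate \eqref{gradphiH3}), a quantitative form that is cited again in the proof of Theorem \ref{wellposed}. Your bound, as written, is $C(\alpha,\Omega)\|q\|$, which suffices for the lemma as stated; note, moreover, that your decomposition recovers the same $\alpha^{-3}$ rate with no extra work, since the biharmonic operator on the left of your rewritten equation is $\alpha$-independent (hence so is its elliptic-regularity constant), while the coercivity estimate with the Poincar\'e inequality gives $\|\Delta\phi\| \le K\alpha^{-1}\|q\|$ and therefore $\|g\| \le K\alpha^{-3}\|q\|$.
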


\begin{proof}
We define the bilinear operator $\mathcal{A}(\phi,\psi)=[\phi,\psi]$, for $\phi,\psi\in H^2_0(\o)$. It is easy to see that
$$|\mathcal{A}(\phi,\psi)|\le C\|\phi\|_2\|\psi\|_2$$
and, also, that
$$\mathcal{A}(\phi,\phi)=[\phi,\phi]=(\nabla \phi,\nabla \phi)+\alpha^2(\Delta \phi,\Delta \phi)\ge C\|\phi\|_2^2,$$  where $C>0$ depends only on $\alpha$ and $\o$.
Using the Lax-Milgram theorem (cf. \cite{cf, Ev}), we  obtain existence and uniqueness of $\phi \in H^2_0(\Omega)$ satisfying \eqref{eq11}.

Next, we will show that the solution operator $q \mapsto \phi$ is continuous from $L^2(\o)$ into  $H^4(\o) \cap H^2_0(\o)$. Indeed, from  Lemma \ref{InvertCurl}, there exists a unique divergence-free vector field $\Phi\in (H^1(\Omega))^2$, with $\Phi\cdot \hat{n}=0$ on $\p\Omega$,  such that
\begin{equation} \label{PhiH1}
\C \Phi=  q~\mbox{ and }~\|\Phi\|_1\le K\|q\|.
\end{equation}

 It is easy to see from \eqref{eq11} that $\phi$ satisfies $\Delta\phi-\alpha^2\Delta^2\phi=q$ in $D'(\Omega).$  Hence we have, in the sense of distributions, the identity
  $$\C (-\alpha^2\Delta(\nabla^{\perp}\phi)- (\Phi-\nabla^{\perp} \phi))=0.$$
 Therefore, since $\Omega$ was assumed to be simply connected, there exists a unique pressure $\pi\in\dot{H}^1$, associated with the irrotational vector field $-\Delta(\nabla^{\perp}\phi)- \frac{1}{\alpha^2}(\Phi-\nabla^{\perp} \phi),$ so that
  \begin{equation}\label{eq12}
\left\{\begin{array}{cl}-\Delta (\nabla^{\perp}\phi)+\nabla \pi=f, &\mbox{in}~~\Omega\\[2mm]
 \D (\nabla^{\perp}\phi)= 0 &\mbox{in}~~\Omega\\[2mm]
\nabla^{\perp}\phi=0 &\mbox{on}~~\p\Omega,
\end{array}\right.
\end{equation}
  where $f=\ds{\frac{1}{\alpha^2}}(\Phi-\nabla^{\perp} \phi) \in H^1(\Omega)$.  From standard estimates on the Stokes operator (see, for example, Lemma IV.6.1 in\cite{ga2}), we have
\begin{equation}\label{est1}
\begin{aligned}
\|\nabla^{\perp}\phi\|_{3}\le K\|f\|_{1}\le \frac{K}{\alpha^2}(\|\Phi\|_{1}+\|\nabla^{\perp}\phi\|_{1}).
\end{aligned}
\end{equation}
Using (\ref{eq11}) with $\psi=\phi$, we obtain, thanks to the Poincar\'e inequality \cite{Ev}
$$\|\nabla \phi\|^2 + \alpha^2\|\Delta\phi\|^2 \le \|\phi\|\|q\| \leq \|\nabla \phi \| \|q\| \frac{1}{\lambda_1^{1/2}},$$
where $\lambda_1$ is the first eigenvalue of the Laplace operator on $\Omega$ with Dirichlet conditions.
Applying  Young's inequality we find that
$ \alpha^2\|\Delta\phi\|^2\le \frac{1}{2\lambda_1^{1/2}}\|q\|^2,$
which, in turn, implies that
\begin{equation} \label{gradphiH1}
\|\nabla^{\perp}\phi\|_{1}\le \frac{K}{\alpha \,\lambda_1^{1/2}}\|q\|,
\end{equation}
by standard elliptic regularity estimates together with the Poincar\'e inequality. Finally,  we use \eqref{PhiH1} and \eqref{gradphiH1} in \eqref{est1}, and we recall that we are interested in the small $\alpha$ regime, say
$\alpha \in (0,\lambda_1^{-1/2})$, we hence obtain
\begin{equation} \label{gradphiH3}
\|\nabla^{\perp}\phi \|_3 \le \frac{K}{\alpha^3}\|q\|.
\end{equation}
It follows from this estimate, together with the Poincar\'e inequality, that $\phi\in H^4(\Omega),$ and that $\|\phi\|_4 \leq \displaystyle{\frac{K}{\alpha^3}\|q\|}$.
\end{proof}
\begin{Remark} \label{BSalpha}
In view of Lemma \ref{ExistStream}, we can now introduce the bounded linear operator $\mathbb{K}: L^2(\o) \to H^3(\o) \cap W^{1,\infty}_0(\o)$, given by $q \mapsto u=\mathbb{K}[q] = \nabla^{\perp}\phi$, where $\phi$ is the unique solution of \eqref{eq10}. We will refer to $\mathbb{K}$ as the Biot-Savart-$\alpha$ operator.
\end{Remark}

\section{Global well-posedness of Euler-$\alpha$ equation}
In this section we will establish global-in-time existence and uniqueness  of a weak solution to the Euler-$\alpha$ equations
\eqref{2dalphaeuler}, see Theorem \ref{wellposed} below.

Recall the Biot-Savart-$\alpha$ operator $\mathbb{K}$ introduced in Remark \ref{BSalpha}.

\begin{Theorem} \label{wellposed}
Fix $T>0$. Let $q_0\in L^2(\o)$, and set $u_0=\mathbb{K}(q_0)$. Then there exists a unique function $q\in C([0,T];L^2(\Omega)) $ and a unique vector field $u=\mathbb{K}(q)\in C([0,T];(H^3(\o))^2 \cap V)$, such that the pair $(u,q)$ is a weak solution of \eqref{eq8} in the following sense:
\\ For any test function $v\in C_0^\infty(\Omega)$  it holds that
\begin{equation}\label{eq13}
\begin{aligned}
( q(t),v)_{L^2}-( q_0,v)_{L^2}-\int_{0}^{t}\int_\Omega( u\cdot\nabla v )q  \dif x\dif t=0,
\end{aligned}
\end{equation}
for every $t\in [0, T]$. Moreover,
\begin{equation}\label{qbd}
\|q(t)\| \leq \|q_0\|,~\mbox{ for all}~t\in [0,T].
\end{equation}
\end{Theorem}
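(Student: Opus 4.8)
The plan is to establish global well-posedness of the vorticity formulation \eqref{eq8} via the standard transport-equation strategy: the potential vorticity $q$ is advected by the divergence-free velocity field $u = \mathbb{K}[q]$, so I would set up a fixed-point iteration coupling a linear transport step with the elliptic solve encapsulated in the Biot-Savart-$\alpha$ operator $\mathbb{K}$. First I would observe that the crucial structural fact is the $L^2$ conservation \eqref{qbd}: since $u$ is divergence-free and vanishes on $\partial\Omega$, multiplying the transport equation $\partial_t q + u\cdot\nabla q = 0$ by $q$ and integrating by parts gives $\frac{d}{dt}\|q\|^2 = -\int_\Omega (u\cdot\nabla q)\,q\,\dif x = \frac12\int_\Omega (\D u)\,q^2\,\dif x = 0$, whence $\|q(t)\| = \|q_0\|$. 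This a priori bound is what closes the estimates uniformly in time and gives the global (rather than merely local) existence.

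Next I would build the iteration scheme. Given an approximate velocity field $u^{(n)} \in C([0,T];V)$, define $q^{(n+1)}$ as the solution of the linear transport equation $\partial_t q^{(n+1)} + u^{(n)}\cdot\nabla q^{(n+1)} = 0$ with initial data $q_0$, and then set $u^{(n+1)} = \mathbb{K}[q^{(n+1)}]$. The linear transport problem is solved along the flow map of $u^{(n)}$: since $u^{(n)}$ is divergence-free, tangent to $\partial\Omega$, and Lipschitz in space (because Remark \ref{BSalpha} gives $u^{(n)} = \mathbb{K}[q^{(n)}] \in H^3(\Omega) \cap W^{1,\infty}_0(\Omega)$, and $H^3 \hookrightarrow W^{1,\infty}$ in two dimensions), its flow is a well-defined volume-preserving homeomorphism of $\overline\Omega$, and $q^{(n+1)}$ is simply $q_0$ transported along this flow, so $\|q^{(n+1)}(t)\| = \|q_0\|$ for every $t$. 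The bound \eqref{gradphiH3} from Lemma \ref{ExistStream} then controls $\|u^{(n+1)}\|_3 \le (K/\alpha^3)\|q_0\|$ uniformly in $n$ and $t$, which is exactly the compactness and equicontinuity needed to pass to a limit and produce a solution $(u,q)$ satisfying \eqref{eq13}.

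The main obstacle, as usual for transport equations, will be uniqueness and the convergence of the iteration, since the natural energy estimate for the difference of two solutions loses a derivative. Concretely, if $(u_1,q_1)$ and $(u_2,q_2)$ are two solutions with the same data, the difference $\delta q = q_1 - q_2$ satisfies $\partial_t \delta q + u_1\cdot\nabla \delta q = -(u_1 - u_2)\cdot\nabla q_2$, and the right-hand side contains $\nabla q_2$, which is one derivative rougher than the $L^2$ regularity of $q_2$. I expect to handle this by working at the level of the flow maps (a Gronwall estimate on the distance between the two Lagrangian flows, controlled through the Lipschitz norm of the velocities and the continuous dependence of $\mathbb{K}$ on $q$ in $L^2$), rather than attempting a direct $L^2$ difference estimate on the vorticities. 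Because $\mathbb{K}$ is a smoothing operator, $\|u_1 - u_2\|_{W^{1,\infty}} \le (K/\alpha^3)\|q_1 - q_2\|$, so the velocity difference is controlled by the $L^2$ vorticity difference, and a contraction can be closed on a short time interval and then bootstrapped to $[0,T]$ using the global bound \eqref{qbd}; this is the delicate point where the explicit $\alpha$-dependent constants from Lemma \ref{ExistStream} must be tracked carefully.
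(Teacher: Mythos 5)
Your skeleton --- the linear transport step solved along the Lipschitz flow (or via DiPerna--Lions), the elliptic solve $u=\mathbb{K}[q]$, the conservation bound \eqref{qbd}, and the uniform bound $\|u^{(n)}\|_3\le (K/\alpha^3)\|q_0\|$ --- coincides with the paper's. The genuine gap is in the step you yourself flag as delicate: closing the contraction. Your loop is (flow distance) $\Rightarrow \|q_1-q_2\|_{L^2} \Rightarrow \|u_1-u_2\|_{W^{1,\infty}} \Rightarrow$ (flow distance), and the first implication fails for $q_0$ merely in $L^2(\Omega)$. Since $q_i(t)=q_0\circ X_i^{-1}(t)$, you would need an estimate of the form $\|q_0\circ X_1^{-1}-q_0\circ X_2^{-1}\|\le \omega\bigl(\sup|X_1-X_2|\bigr)$ with a quantitative modulus $\omega$, uniform over the data appearing in the iteration; no such modulus exists. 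For example, take $q_0$ a cut-off of $\sin(Nx_1)$ and two volume-preserving maps differing locally by a translation of size $\pi/N$: the flow distance is $O(1/N)$ while the $L^2$ difference of the transported vorticities remains of order $\|q_0\|$. Composition with measure-preserving maps is continuous in $L^2$ but has no rate, so the contraction cannot be closed along the chain you propose. Note also that the uniform $H^3$ bound and compactness do not rescue existence by themselves: for a Picard iteration $u^{(n+1)}=\calF[u^{(n)}]$, subsequential limits of $u^{(n)}$ and of $u^{(n+1)}$ need not coincide unless the whole sequence converges, which is exactly what the contraction is for.

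The paper's way around this is to never estimate $q_1-q_2$ at all: it converts the transported vorticity back to the velocity level, observing that $\tu^i=\calF[u^i]$ and $\tv^i=\tu^i-\alpha^2\Delta\tu^i$ satisfy the modified system $\partial_t\tv^i+(u^i\cdot\nabla)\tv^i-\sum_j u^i_j\nabla\tv^i_j=-\nabla p^i$, subtracts the two systems, and tests with $S=\tu^1-\tu^2$. All dangerous terms (those carrying $\Delta S$ or $\nabla\tv^i$) are removed by integration by parts, the divergence-free condition, and the uniform bound \eqref{evenmore}, yielding the differential inequality $\frac{\dif}{\dif t}\bigl(\|S\|^2+\alpha^2\|\nabla S\|^2\bigr)\le C_1\bigl(\|S\|^2+\alpha^2\|\nabla S\|^2\bigr)+C_2\bigl(\|R\|^2+\alpha^2\|\nabla R\|^2\bigr)$ with $R=u^1-u^2$ and $S(0)=0$; Gronwall then gives a contraction in $C([0,\delta];V)$ for small $\delta$, and the $H^3$ regularity of the fixed point is recovered afterwards by Banach--Alaoglu. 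If you insist on a Lagrangian route, you must bypass $\|q_1-q_2\|$ entirely by writing $u_i(x,t)=\int_\Omega k\bigl(x,X_i(z,t)\bigr)q_0(z)\,\dif z$ (the kernel of $\mathbb{K}$ combined with volume preservation) and estimating the velocity difference through the modulus of continuity of $k$ in its second argument, in the style of Marchioro--Pulvirenti; this can work in principle, but it requires pointwise Green's-function estimates for $\Delta-\alpha^2\Delta^2$ with clamped boundary conditions in a bounded domain, machinery that Lemma \ref{ExistStream} does not provide.
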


\begin{Remark} \label{OliverEtAlli}
In \cite{ko}, the authors  consider the Euler-$\alpha$ equation with Navier (slip) boundary conditions, and they prove the existence of solution by constructing the solution as the   limit of viscous regularization of the $\alpha-$model.  Here, we  will use the Banach fixed point theorem.
\end{Remark}

\begin{proof}
We begin by constructing a mapping $\mathcal{F}$ from $C([0,T]; V)$ to itself which, subsequently, we will show is a contraction.
For simplicity's sake we first consider the vorticity formulation of the Euler-$\alpha$ equations.

Let $u \in C([0,T]; V)$. It follows from the existence, uniqueness and regularity results of the DiPerna-Lions  \cite{dl}, that the following {\em linear} problem has a unique weak (distributional) solution $\tilde{q} \in C ([0,T];L^2(\o))$:

\begin{equation} \label{qMapping}
\left\{
\begin{array}{l}
\partial_t \tq + u \cdot \nabla \tq = 0, \\
\tq(0,\cdot) = q_0.
\end{array}
\right.
\end{equation}
Moreover, the following estimate holds true:

\begin{equation} \label{tqEst}
 \|\tq (t\| \leq \|q_0\|, ~\mbox{ for all}~t\in [0,T].
\end{equation}
Next, we introduce a new velocity, $\tu$,  constructed as follows:
\[\tu = \nabla^{\perp} \tilde{\phi}, \text{ where } \tilde{\phi} \in H^4(\o) \cap H^2_0(\o), \text{ and }\]
\[\Delta \tilde{\phi} - \alpha^2 \Delta^2 \tilde{\phi} = \tq, \text{ in } [0,T] \times \o.\]
It follows that $\tu =  \mathbb{K}[\tq]$. In view of Lemma \ref{ExistStream} and Remark \ref{BSalpha}, it follows that
$\tu \in C([0,T];(H^3(\o))^2 \cap V)$.

We introduce the mapping $\calF:  C([0,T];V\cap (H^3(\o))^2)  \to  C([0,T];V\cap (H^3(\o))^2) $ as
\[u \mapsto \calF[u] := \tu.\]
We easily obtain that
\[ \sup_{t\in [0,T]}\|\calF[u](t)\|_1 \leq C\sup_{t\in [0,T]}\|\tq (t)\| \leq C\|q_0\|.\]
In fact, in view of \eqref{gradphiH3}, as established in Lemma \ref{ExistStream}, we have even more:

\begin{align} \label{evenmore}
 \sup_{t\in [0,T]}\|\calF[u](t) \|_3 \leq C\sup_{t\in [0,T]}\|\tq(t) \| \leq C\|q_0\|.
\end{align}
Let $\tv := \tu - \alpha^2 \Delta \tu$.
Next, we note that $(\tu,\tv)$ is a solution of the following modified Euler-$\alpha$ system:

\begin{equation} \label{modEuleralpha}
\left\{
\begin{array}{ll}
\partial_t \tv + u \cdot \nabla \tv - \sum_j u_j \nabla \tv_j + \nabla p = 0, & \text{ in } (0,T) \times \o , \\
\D \tu = 0, & \text{ in } [0,T]\times\o , \\
\tu = 0, & \text{ on } [0,T]\times\partial\o , \\
\tu(0,\cdot) = u_0, & \text{ on } \o .
\end{array}
\right.
\end{equation}
Indeed, one has the identity $$\partial_t \tq + u \cdot \nabla \tq =\C(\partial_t \tv + u \cdot \nabla \tv - \sum_j u_j \nabla \tv_j).$$ Thanks to \eqref{qMapping}, one concludes that
$$\C(\partial_t \tv + u \cdot \nabla \tv - \sum_j u_j \nabla \tv_j)=0.$$ Since $\Omega$ is simply connected, there exists a pressure $p$ such that
$$\partial_t \tv + u \cdot \nabla \tv - \sum_j u_j \nabla \tv_j =- \nabla p.$$
Thus the first equation of \eqref{modEuleralpha} holds. We use system \eqref{modEuleralpha} to show that, for some sufficiently small $\delta > 0$, $\calF$ is a contraction with respect to the norm  $C([0,\delta];V)$. To this end let $u^1$ and $u^2$ be divergence-free vector fields in $C([0,\delta];V \cap (H^3(\o))^2)$, for some $\delta > 0$ to be fixed later. Consider $\tu^1$, $\tu^2$, $\tv^1 = \tu^1 - \alpha^2\Delta\tu^1$ and $\tv^2=\tu^2 - \alpha^2\Delta\tu^2$. Set
\[R = u^1 - u^2,\]
\[S = \tu^1 - \tu^2 \equiv \calF[u^1] - \calF[u^2].\]
Note that
\[\tv^1 - \tv^2 = S - \alpha^2 \Delta S.\]
Subtracting the equation for $\tu^2$ from that for $\tu^1$ we obtain:
\begin{equation} \label{diff}
\partial_t (S-\alpha^2\Delta S) + u^1 \cdot \nabla \tv^1 - u^2 \cdot \nabla \tv^2 - \sum_j u^1_j \nabla \tv^1_j + \sum_j u^2_j\nabla\tv^2_j + \nabla p^1 - \nabla p^2 = 0.
\end{equation}
Take the scalar product of \eqref{diff} with $S$, re-write the nonlinear terms using $R$ and $S$ and integrate over $\o$ to obtain:
\begin{align} \label{differentialest1}
&\frac{1}{2}\frac{\dif}{\dif t} (\|S\|^2 + \alpha^2\|\nabla S\|^2)  \\
&= \;\;\; - \int_{\o} S \cdot [(R \cdot \nabla )\tv^1 + (u^2\cdot\nabla) (S-\alpha^2\Delta S)] \dif x \nonumber \\
& \;\;\;\;\;\;\; + \int_{\o} S \cdot \left[\sum_j u^1_j \nabla (S-\alpha^2\Delta S)_j + \sum_j R_j\nabla\tv^2_j\right]\dif x  \nonumber \\
& =: I + J. \nonumber
\end{align}
We begin by estimating first   $I$. We note, as usual, that $(S,u^2\cdot \nabla S) = 0$, so that we find:
\begin{align*}
&|I| \leq \left|\int_{\o} S \cdot [(R \cdot \nabla )\tv^1 - (u^2\cdot\nabla) \alpha^2\Delta S)] \dif x \right| \\
&= \left|\int_{\o} S \cdot [(R \cdot \nabla )\tv^1] + [(u^2\cdot\nabla) S] \cdot\alpha^2\Delta S) \dif x \right|\\
&= \left|\int_{\o} S \cdot [(R \cdot \nabla )\tv^1] + \alpha^2\sum_{k,\ell} [\partial_{\ell} u^2_k\partial_{k}S + u^2_k\partial_k\partial_{\ell}S]\partial_{\ell} S) \dif x \right|\\
&= \left|\int_{\o} S \cdot [(R \cdot \nabla )\tv^1] + \alpha^2\sum_{k,\ell} [\partial_{\ell} u^2_k\partial_{k}S]\partial_{\ell} S) \dif x \right|,
\end{align*}
where we integrated by parts the term with the Laplacian and then used the divergence-free condition on $u^2$ to show that the remaining term with two derivatives of $S$ vanishes.
Therefore, using H\"older's inequality, we deduce that
\[|I|\leq \|S\|_{L^4}\|R\|_{L^4}\|\nabla\tv^1\| + \alpha^2 \|\nabla u^2\|_{L^{\infty}} \| \nabla S\|^2, \]
so that, using the Sobolev inequality, we get
\begin{align} \label{Aest}
|I|\leq C \|\nabla S\|\|\nabla R\|+ C\alpha^2   \|\nabla S\|^2.
\end{align}

Next, we estimate the second integral term, $J$. We find, using H\"older's inequality together with the divergence-free condition on $S$, that:
\begin{align*}
&|J| \leq \left|\int_{\o} S \cdot \left[\sum_j u^1_j \nabla (S-\alpha^2\Delta S)_j + \sum_j R_j\nabla\tv^2_j\right]\dif x \right|\\
&\leq \left| \int_{\o} \sum_j u^1_j S \cdot \nabla S_j   \dif x - \alpha^2 \int_{\o} \sum_j u^1_j \D(\Delta S_j\,S) \dif x \right| +
\|S\|_{L^4} \|R\|_{L^4} \|\nabla \tv^2\|  \\
&\leq \|u^1\|_{L^{\infty}} \|S\| \|\nabla S\| + \alpha^2 \left| \int_{\o} \sum_j \nabla u^1_j \cdot S\, \Delta S_j  \dif x \right| + \|S\|_{L^4} \|R\|_{L^4} \|\nabla \tv^2\|  \\
&=\|u^1\|_{L^{\infty}} \|S\| \|\nabla S\| + \alpha^2 \left| \int_{\o} \sum_{j,k} \nabla u^1_j \cdot S \partial_k\partial_k \, S_j  \dif x \right| + \|S\|_{L^4} \|R\|_{L^4} \|\nabla \tv^2\|\\
&\leq \|u^1\|_{L^{\infty}} \|S\| \|\nabla S\| + \alpha^2 \left| \int_{\o} \sum_{j,k} \partial_k\nabla u^1_j \cdot S \, \partial_k \, S_j  +
\nabla u^1_j \cdot \partial_k S  \, \partial_k \, S_j \dif x \right| + \|S\|_{L^4} \|R\|_{L^4} \|\nabla \tv^2\|  \\
&\leq \|u^1\|_{L^{\infty}} \|S\| \|\nabla S\| + \alpha^2 \sum_{j,\ell}\|\partial_k\partial_{\ell} u^1_j\|_{L^4}\|S\|_{L^4}\|\nabla S\|
+ \alpha^2 \|\nabla u^1\|_{L^{\infty}} \|\nabla S \|^2
 + \|S\|_{L^4} \|R\|_{L^4} \|\nabla \tv^2\| ,\end{align*}
where we integrated by parts the term with the Laplacian.
Therefore, using the Sobolev inequality, followed by Young's inequality, together with the uniform bound \eqref{evenmore}, we arrive at
\begin{align} \label{Best}
|J| \leq C\|S\|\|\nabla S\| + C\alpha^2 \|\nabla S \|^2 + C \|\nabla S \| \|\nabla R\|.
\end{align}
Insert the estimates derived in \eqref{Aest} and \eqref{Best} into \eqref{differentialest1} leads to the differential inequality

\begin{align} \label{differentialest2}
\frac{1}{2}\frac{\dif}{\dif t} (\|S\|^2 + \alpha^2\|\nabla S\|^2)  \\
& \leq  C \|\nabla S\|\|\nabla R\|+ C\alpha^2   \|\nabla S\|^2 + C\|S\|\|\nabla S\| \nonumber \\
& \leq C_1 (\|S\|^2 + \alpha^2\|\nabla S\|^2) + C_2(\|R\|^2 + \alpha^2\|\nabla R\|^2).\nonumber
\end{align}
Recall that $S(t=0)=0$, since $u^1(t=0) = \tu^1(t=0) = \tu^2 (t=0)= u^2 (t=0) = u_0$. Hence, we obtain by Gronwall's inequality, that

\begin{align}        \label{almostthere}
(\|S\|^2 + \alpha^2\|\nabla S\|^2)(t) \leq \int_0^t (\|S\|^2 + \alpha^2\|\nabla S\|^2)(s) \, e^{C(t-s)} \dif s.
\end{align}
Taking the supremum, for $t \in [0,\delta]$, of the norms $(\|S\|^2 + \alpha^2\|\nabla S\|^2)(t)$ we deduce
\begin{align} \label{gronwall}
\sup_{t\in [0,\delta]} (\|S\|^2 + \alpha^2\|\nabla S\|^2)(t) \leq \frac{e^{C \delta} - 1}{C} \sup_{t \in [0,\delta]} (\|R\|^2 + \alpha^2\|\nabla R\|^2)(t).
\end{align}
Therefore, if we choose $\delta>0$ small enough, so that $\sigma=\ds{\frac{e^{C \delta} - 1}{C}} < 1$ then we have shown that $\calF$ is a contraction with respect to the $H^1$-norm, for short interval of time $[0,\delta]$.
Ineed, we have obtained the estimate
\begin{align} \label{contraction}
\sup_{t \in [0,\delta]} \|\calF[u^1]-\calF[u^2]\|_1(t) \leq \sigma  \sup_{t \in [0,\delta]} \|u^1-u^2\|_1(t).
\end{align}
We invoke the Banach fixed point theorem in metric spaces to conclude the existence of a unique fixed point $u \in C([0,\delta];V)$. This fixed point is also the limit of the fixed point iteration, where $u^0\equiv u_0$ and $u^n \equiv \calF[u^{n-1}]$, as the argument. We easily know that the sequence $\{u^n\}$ converge to $u$ in $C([0,T];V)$.  As $u_0 \in (H^3(\Omega))^2$ it follows from \eqref{evenmore} that
\[\sup_{t\in [0,\delta]}\|u^n(t)\|_3 \leq C\|q_0\|,\]
for all $n$. Hence, by the Banach-Aloaglu theorem there exists a subsequence $\{u^{n_k}\}$ which converges, weak-$\ast$ in $L^{\infty}((0,\delta);(H^3(\o))^2)$, to a limit in the same space. As this subsequence also converges strongly in $C([0,\delta];V)$ to the unique fixed point $u$, it follows, by uniqueness of limits, that the fixed point belongs to the more regular space $L^{\infty}((0,\delta);(H^3(\o))^2) $.

In fact, since $u\in C([0,\delta];V)$, we also have $q\in C([0,\delta];L^2(\o))$ from the uniqueness and regularity reuslts in \cite{dl} for the transport equation \eqref{qMapping}.  Clearly, $u$ is a solution of \eqref{eq13} with $q = \C v$, $v=u-\alpha^2\Delta u$.   Consequently, $u\in C([0,\delta];(H^3(\o))^2).$ Therefore, it follows that $u$ is a distributional solution of \eqref{2dalphaeuler}. Since the $C([0,\delta];(H^3(\Omega))^2)$-norm of $u$ is bounded independent of $\delta$, we can repeat the argument above  and extend the solution to any interval $[0,T].$

\end{proof}

 \section {Convergence as $\alpha\to 0$}

 In  \cite{lt}, the authors have studied the convergence  of smooth solutions of the Euler-$\alpha$ to corresponding solutions of the Euler equations, as $\alpha\to 0,$ in whole space. In this section, we will prove that  the solutions $\{u^\alpha\}$ of Euler-$\alpha$ equations, with Dirichlet boundary conditions,  converge to the unique solution $\bar{u}$ of Euler equations, as $\alpha\to 0$ . Specifically, we state and prove the following theorem which is the main result in this paper:

\begin{Theorem} \label{mainth}
Fix $T>0$, and let $u_0\in (H^3(\Omega))^2\cap H$.  Assume also that we are given
a suitable family of  approximations $\{u_0^{\alpha} \}_{\alpha > 0} \subset (H^3(\Omega))^2$ for $u_0,$ satisfying (\ref{E1}). Suppose that  $u^\alpha \in C([0,T];(H^3(\Omega))^2)$ is the unique  solution of Euler-$\alpha$  with initial velocity $u^\alpha_0$, established in Theorem \ref{wellposed}. Let $\bar{u} = \bar{u}(t,x) \in C([0,T];(H^3(\Omega))^2)\cap C^1([0,T];(H^2(\o))^2)$ be the unique strong solution of the incompressible Euler equations with initial velocity $u_0$.  Then
\begin{equation}\label{Rlimit}
\begin{aligned}
\lim_{\alpha\to 0}\sup_{t\in [0,T]}\|u^\alpha (t)-\bar{u}(t)\|=0, ~\mbox{and}~\lim_{\alpha\to 0}\sup_{t\in [0,T]}\alpha^2\|\nabla u^\alpha(t)\|=0.
\end{aligned}
\end{equation}
\end{Theorem}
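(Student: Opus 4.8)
The plan is to reduce the whole statement to a single scalar Gronwall estimate. First I would record two energy identities. Testing the momentum equation of \eqref{2dalphaeuler} against $u^\alpha$ (legitimate, since $u^\alpha(t)\in (H^3(\o))^2\cap V$ vanishes on $\p\o$) makes the pressure and the entire nonlinearity drop, giving conservation of the $H^1$-type energy $\|u^\alpha(t)\|^2+\alpha^2\|\nabla u^\alpha(t)\|^2=\|u_0^\alpha\|^2+\alpha^2\|\nabla u_0^\alpha\|^2$; by (ii)--(iii) of \eqref{E1} the right-hand side tends to $\|u_0\|^2$. Similarly $\|\bar u(t)\|=\|u_0\|$. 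Setting $\mathcal E(t):=\|u^\alpha(t)-\bar u(t)\|^2+\alpha^2\|\nabla u^\alpha(t)\|^2$ and expanding the square, these two conservation laws yield the identity $\mathcal E(t)=\big(\|u_0^\alpha\|^2+\alpha^2\|\nabla u_0^\alpha\|^2\big)+\|u_0\|^2-2(u^\alpha(t),\bar u(t))$. Hence $\mathcal E(0)=\|u_0^\alpha-u_0\|^2+\alpha^2\|\nabla u_0^\alpha\|^2=o(1)$ and $\tfrac{\dif}{\dif t}\mathcal E=-2\tfrac{\dif}{\dif t}(u^\alpha,\bar u)$. Since $\mathcal E\ge\|u^\alpha-\bar u\|^2$ and $\alpha\|\nabla u^\alpha\|\le\mathcal E^{1/2}$, both limits in \eqref{Rlimit} follow at once from $\sup_{[0,T]}\mathcal E\to0$. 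Everything thus reduces to bounding $\tfrac{\dif}{\dif t}(u^\alpha,\bar u)$.

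Next I would differentiate the cross term. From \eqref{2deuler}, $(u^\alpha,\p_t\bar u)=-(u^\alpha,\bar u\cdot\nabla\bar u)$. For $(\p_t u^\alpha,\bar u)$ I would use $v^\alpha=u^\alpha-\alpha^2\Delta u^\alpha$ to write $(\p_t u^\alpha,\bar u)=(\p_t v^\alpha,\bar u)+\alpha^2\tfrac{\dif}{\dif t}(\Delta u^\alpha,\bar u)-\alpha^2(\Delta u^\alpha,\p_t\bar u)$ and substitute the momentum equation into $(\p_t v^\alpha,\bar u)$; the pressure drops because $\bar u$ is divergence free with $\bar u\cdot\hat{n}=0$. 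Integrating the convective terms by parts --- the boundary contributions vanish because $u^\alpha=0$ on $\p\o$, so both $u^\alpha\cdot\hat{n}$ and the tangential derivative of $u^\alpha$ vanish there --- and then inserting $v^\alpha=u^\alpha-\alpha^2\Delta u^\alpha$, the $\alpha^0$ convective terms combine with $-(u^\alpha,\bar u\cdot\nabla\bar u)$ into $(u^\alpha,(w\cdot\nabla)\bar u)$, $w:=u^\alpha-\bar u$, which is $\le K\mathcal E$ (write $u^\alpha=w+\bar u$ and use $w\cdot\hat{n}=0$ on $\p\o$). The $\alpha^2$ convective corrections are paired against the smooth $\bar u$, whose derivatives are $\mathcal{O}(1)$ uniformly on $[0,T]$ (as $\bar u\in C([0,T];(H^3)^2)\cap C^1([0,T];(H^2)^2)$); after integration by parts they are bounded by $K\alpha^2\|\nabla u^\alpha\|^2\le K\mathcal E$ plus terms of size $K\alpha$, and the same holds for $\alpha^2(\nabla u^\alpha,\nabla\p_t\bar u)$. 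Thus all interior contributions are $\le K\mathcal E+o(1)$.

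The only terms escaping this are boundary integrals produced when the Laplacian in the $\alpha^2$-terms is integrated by parts against $\bar u$, which is tangential but nonzero on $\p\o$: the quantity $\mathcal B(t):=\alpha^2\int_{\p\o}(\p_{\hat{n}}u^\alpha)\cdot\bar u\,\dif S$ (from the endpoints of the total-derivative term $\alpha^2\tfrac{\dif}{\dif t}(\Delta u^\alpha,\bar u)$) and the time integral of $\alpha^2\int_{\p\o}(\p_{\hat{n}}u^\alpha)\cdot\p_t\bar u\,\dif S$. These encode the boundary layer and are the heart of the matter. To treat $\mathcal B$ I would use a Kato-type corrector built from a stream function: a scalar $\psi^{bl}$, supported in the strip $\Gamma_\delta=\{\mathrm{dist}(x,\p\o)<\delta\}$, with $\psi^{bl}=0$ and $\p_{\hat{n}}\psi^{bl}=-\bar u\cdot\hat{\tau}$ on $\p\o$, so that $\bar u^{bl}:=\nabla^\perp\psi^{bl}$ is divergence free, agrees with $\bar u$ on $\p\o$, and satisfies the layer bounds $\|\psi^{bl}\|\lesssim\delta^{3/2}$, $\|\bar u^{bl}\|=\|\nabla\psi^{bl}\|\lesssim\delta^{1/2}$ and $\|\nabla\bar u^{bl}\|\lesssim\delta^{-1/2}$, uniformly in $t$. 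Replacing $\bar u$ by $\bar u^{bl}$ on $\p\o$ and applying the divergence theorem rewrites $\mathcal B$ as the interior integrals $\alpha^2\int_\o\nabla u^\alpha:\nabla\bar u^{bl}+\int_\o(u^\alpha-v^\alpha)\cdot\bar u^{bl}$, both supported in $\Gamma_\delta$.

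I expect this last step to be the main obstacle, precisely because of a tension in the choice of the layer width $\delta=\delta(\alpha)$. The gradient term is $\le\alpha^2\|\nabla u^\alpha\|\,\|\nabla\bar u^{bl}\|\lesssim\alpha\,\delta^{-1/2}(\alpha\|\nabla u^\alpha\|)\le\tfrac12\mathcal E+K\alpha^2\delta^{-1}$, which demands $\delta\gg\alpha^2$; the $u^\alpha$-piece is harmless, since $u^\alpha=0$ on $\p\o$ gives, by Poincar\'e across the strip, $\|u^\alpha\|_{L^2(\Gamma_\delta)}\lesssim\delta\|\nabla u^\alpha\|$, whence $\int_\o u^\alpha\cdot\bar u^{bl}\lesssim\delta^{3/2}\alpha^{-1}$. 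The delicate piece is $\int_\o v^\alpha\cdot\bar u^{bl}$: the naive bound $\|v^\alpha\|\sim\alpha^{-1}$ over the full strip would force $\delta\ll\alpha^2$ and clash with $\delta\gg\alpha^2$. The way out is to integrate by parts through the stream function, $\int_\o v^\alpha\cdot\nabla^\perp\psi^{bl}=\int_\o(\C v^\alpha)\,\psi^{bl}=\int_\o q\,\psi^{bl}$ (the boundary term vanishing as $\psi^{bl}=0$ on $\p\o$), and then invoke $\|q(t)\|\le\|q_0\|$ from Theorem \ref{wellposed} together with (iii)--(iv), which give $\|q_0\|=\mathcal{O}(\alpha^{-1})$; thus $\int_\o v^\alpha\cdot\bar u^{bl}\le\|q\|\,\|\psi^{bl}\|\lesssim\alpha^{-1}\delta^{3/2}$. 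Trading $v^\alpha$ for its vorticity $q$ against the much smaller $\psi^{bl}$ reconciles the two requirements, and any $\delta$ with $\alpha^2\ll\delta\ll\alpha^{2/3}$ (e.g. $\delta=\alpha$) makes every layer error $o(1)$. The integrated $\p_t\bar u$-boundary term is handled identically, using $\p_t\bar u\cdot\hat{n}=0$ on $\p\o$. Collecting everything gives $\tfrac{\dif}{\dif t}\mathcal E\le K\mathcal E+o(1)$ uniformly in $t$ (the endpoint and total-derivative terms absorbed by working, if needed, with $\mathcal E-2\alpha^2(\nabla u^\alpha,\nabla\bar u)$, which differs from $\mathcal E$ by $o(1)$); Gronwall with $\mathcal E(0)=o(1)$ then yields $\sup_{[0,T]}\mathcal E\to0$, which is the assertion.
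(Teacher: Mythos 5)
Your proposal is correct, and at the architectural level it matches the paper: both arguments are $L^2$-energy/Gronwall estimates built around a Kato-type boundary corrector supported in a strip of width $\delta(\alpha)$ (your $\psi^{bl}$ is, up to sign conventions, exactly the paper's $z\bar{\psi}$ from \eqref{24}, with the same bounds as in Lemma \ref{ubest}), and your cross-term formulation $\mathcal{E}(t)=\mbox{conserved quantities}-2(u^\alpha,\bar{u})$ is an algebraic repackaging of the paper's estimate, which tests the equation for $W^\alpha=u^\alpha-\bar{u}$ against $W^\alpha$ itself. The genuine difference lies in how the worst term --- $\alpha^2\partial_t\Delta u^\alpha$ paired with $\bar{u}$, which does not vanish on $\partial\Omega$ --- is controlled. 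The paper stays with volume integrals, splits $\bar{u}=(\bar{u}-u_b)+u_b$, and bounds the dangerous piece $\alpha^2\int_\Omega\Delta u^\alpha\cdot u_b\,\mathrm{d}x$ by the interpolation inequality \eqref{interinq} combined with the a priori bound $\|u^\alpha\|_3\le K\alpha^{-3}$ of \eqref{281}, yielding errors of order $\delta^{1/2}$. You instead push the difficulty onto the boundary integral $\mathcal{B}$, pull it back into the domain by the divergence theorem against the corrector, and dispose of the $v^\alpha$-piece through the pairing $\int_\Omega v^\alpha\cdot\nabla^{\perp}\psi^{bl}\,\mathrm{d}x=\int_\Omega q\,\psi^{bl}\,\mathrm{d}x\le\|q\|\,\|\psi^{bl}\|\le K\alpha^{-1}\delta^{3/2}$, using only the transport bound $\|q(t)\|\le\|q_0\|=\mathcal{O}(\alpha^{-1})$ from Theorem \ref{wellposed} and the smallness of the corrector stream function. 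This route never needs \eqref{interinq} or the pointwise $H^3$ bound \eqref{281}, which is leaner; the price is a narrower admissible window $\alpha^2\ll\delta\ll\alpha^{2/3}$ instead of the paper's $\alpha^2\ll\delta\ll1$, which is harmless since $\delta=\alpha$ works. You also correctly observe that the $\alpha^2$ convective terms need no corrector at all, because $u^\alpha=0$ on $\partial\Omega$ kills both the normal flux and the tangential derivative in all boundary contributions; the paper instead runs $I_3'$ through the corrector (the $J_1$, $J_2$ cancellation) and recovers the same bound, so your treatment there is a mild simplification rather than a gap. Modulo routine bookkeeping (Young's inequality constants so the endpoint terms $\mathcal{B}(t)$ can be absorbed into the left-hand side before Gronwall, and the time-regularity needed to justify the total-derivative manipulation, which the paper treats at the same level of formality), your plan goes through.
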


In \cite{kato} T. Kato established a criterion for the convergence, of the vanishing viscosity limit of solutions of the Navier-Stokes equations subject to the homogeneous Dirichlet boundary conditions,  to a solution of the Euler equations in domains with physical boundaries. The proof of Theorem \ref{mainth} is inspired by Kato's argument. The main ingredient consists of establishing a boundary layer corrector function for the discrepancy between $u^\alpha$ and $\bar{u}$ near the boundary. To construct this boundary  corrector function we  consider, first, the stream function $\bar{\psi}=\bar{\psi}(t,x)$ associated to $\bar{u}$, given by the unique solution of the elliptic equation
\begin{equation} \label{barpsieq}
\left\{
\begin{aligned}
& \Delta \bar{\psi} = \C \bar{u}, & \text{ in } \Omega,\\
& \bar{\psi} = 0, & \text{ on } \partial \Omega.
\end{aligned}
\right.
\end{equation}
It follows classically that
\[\bar{u} = \nabla^{\perp}\bar{\psi}.\]
%where $\nabla^{\perp} = (-\partial_{x_2}, \partial_{x_1})$.

% \begin{Lemma}(see \cite{kato})
% There exist a smooth and skew-symmetric tensor of order two $\bar{a}$ defined on $[0,T]\times\bar{\o}$, with $\D \bar{ a}=\sum_{k}\p_k\bar{a}_{jk}$, such that
% \begin{equation}\label{21}
% \bar{u}=\D \bar{a}~\mbox{on}~\p\o, \bar{a}=0~\mbox{on}~\p\o.
% \end{equation}
% \end{Lemma}
%
Let $\xi: \mathbb{R}^+\to [0,1]$ be a smooth cut-off function such that
\begin{equation}\label{22}
\xi(0)=1, \;\;\;\xi(r)=0~\mbox{for}~r\ge 1.
\end{equation}
Let $\delta>0,$ be small enough to be determined later, and set
\begin{equation}\label{23}
z=z(x)= \xi\left(\frac{\rho}{\delta}\right), \; ~\mbox{where}~\rho= \mathrm{dist} (x,\p\o), ~\mbox{for any } ~x\in\bar{\o}.
\end{equation}
We introduce the boundary layer corrector $u_b=u_b(t,x)$ as

\begin{equation}\label{24}
u_b= \nabla^{\perp}(z\bar{\psi}).
\end{equation}
We collect below some useful estimates on the boundary layer corrector function.

\begin{Lemma} \label{ubest}
Let $u_b$ be defined by (\ref{24}). Then we have that:
%\begin{equation}\label{25}
%u_b=\bar{u}~\mbox{on}~\p\o,~\D u_b=0~\mbox{in}~\o.
%\end{equation}
\begin{equation}\label{26}
\begin{aligned}
&\sup_{t\in [0,T]} \|\p_t^{\ell}u_b(t)\|\le K\delta^{\frac{1}{2}}, \sup_{t\in [0,T]} \|\p^{\ell}_t\nabla u_b(t)\|\le K\delta^{-\frac{1}{2}},\\
&\sup_{t\in [0,T]}\|\rho^2\nabla u_b(t)\|_{L^\infty}\le K\delta,  \sup_{t\in [0,T]}\|\rho\nabla u_b(t)\|\le K\delta^{\frac{1}{2}},
\end{aligned}
\end{equation}
where $\ell=0,1$ and $K$ depends only on $\bar{u}, \xi$ and $\o$, but does not depend on $\delta$.
\end{Lemma}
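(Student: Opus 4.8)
The plan is to reduce everything to the algebraic identity
\[
u_b = \nabla^{\perp}(z\bar{\psi}) = z\,\nabla^{\perp}\bar{\psi} + \bar{\psi}\,\nabla^{\perp}z = z\,\bar{u} + \bar{\psi}\,\nabla^{\perp}z,
\]
and then to track three independent sources of powers of $\delta$: the measure of the support of $z$, the derivatives falling on $z$, and the vanishing of $\bar{\psi}$ at the boundary. First I would record the geometric facts about the distance function. Since $\partial\o$ is smooth, $\rho$ is smooth on a collar neighborhood $\{0\le\rho\le\rho_0\}$ with $|\nabla\rho|=1$ and $|\nabla^k\rho|\le K$ there; taking $\delta<\rho_0$, the chain rule gives $0\le z\le 1$ and $|\nabla^k z|\le K\delta^{-k}$ for $k=1,2$, and $z$ is supported in the strip $\Sigma_\delta=\{0\le\rho\le\delta\}$. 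For a smooth bounded planar domain this strip is a tubular neighborhood of measure $|\Sigma_\delta|\le K\delta$, so the $L^2$-norm over $\Sigma_\delta$ of any function bounded in $L^\infty$ by $M$ is at most $KM\delta^{1/2}$.

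The second ingredient is the \emph{linear vanishing} of the stream function: since $\bar\psi=0$ on $\partial\o$ and $\nabla\bar\psi$ equals $\bar u$ rotated by $\pi/2$, while $\bar u\in C([0,T];(H^3(\o))^2)\hookrightarrow C([0,T];C^0(\bar\o))$ in two dimensions, $\bar\psi$ is Lipschitz up to the boundary with $|\bar\psi(t,x)|\le \|\bar u\|_{L^\infty}\rho(x)\le K\rho(x)$, uniformly in $t$. The same bound holds for $\partial_t\bar\psi$: differentiating $\Delta\bar\psi=\C\bar u$, $\bar\psi|_{\partial\o}=0$ in time shows $\partial_t\bar\psi$ solves the same Dirichlet problem with datum $\C\partial_t\bar u\in C([0,T];H^1(\o))$, so $\partial_t\bar\psi\in C([0,T];H^3(\o))\hookrightarrow C^1(\bar\o)$ with $\partial_t\bar\psi|_{\partial\o}=0$, giving $|\partial_t\bar\psi|\le K\rho$ and $|\nabla\partial_t\bar\psi|\le K$. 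Because $z$ is time-independent, $\partial_t u_b=z\,\partial_t\bar u+\partial_t\bar\psi\,\nabla^{\perp}z$ has exactly the same structure as $u_b$, so the $\ell=1$ estimates follow verbatim from the $\ell=0$ ones once these uniform bounds on $\partial_t\bar\psi$ and $\partial_t\bar u$ are in hand.

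With these in place the four estimates become bookkeeping. For $\|u_b\|$ I would bound $z\bar u$ by $\|\bar u\|_{L^\infty}|\Sigma_\delta|^{1/2}\le K\delta^{1/2}$, and $\bar\psi\,\nabla^{\perp}z$ pointwise by $(K\rho)(K\delta^{-1})\le K$ on $\Sigma_\delta$, again of $L^2$-size $K\delta^{1/2}$. For $\nabla u_b$ I would expand $\nabla(z\bar u)=(\nabla z)\bar u+z\nabla\bar u$ and $\nabla(\bar\psi\,\nabla^{\perp}z)=\nabla\bar\psi\otimes\nabla^{\perp}z+\bar\psi\,\nabla\nabla^{\perp}z$: the three ``singular'' terms $(\nabla z)\bar u$, $\nabla\bar\psi\,\nabla^{\perp}z$ and $\bar\psi\,\nabla^2 z$ are each pointwise of size $K\delta^{-1}$ on $\Sigma_\delta$ --- crucially the last one, because $|\bar\psi|\,|\nabla^2 z|\le(K\rho)(K\delta^{-2})\le K\delta^{-1}$ --- so their $L^2$-norms are $\le K\delta^{-1/2}$, while $z\nabla\bar u$ is bounded and contributes only $K\delta^{1/2}$. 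The weighted estimates use the same four terms multiplied by $\rho\le\delta$ or $\rho^2\le\delta^2$ on the support: for $\|\rho^2\nabla u_b\|_{L^\infty}$ each size-$\delta^{-1}$ term picks up $\rho^2\le\delta^2$ to give $K\delta$ (the $\bar\psi\,\nabla^2 z$ term giving $\rho^2\cdot K\rho\cdot K\delta^{-2}\le K\delta$), and for $\|\rho\nabla u_b\|$ multiplication by $\rho\le\delta$ turns the size-$\delta^{-1}$ terms into functions bounded on $\Sigma_\delta$, hence of $L^2$-size $K\delta^{1/2}$.

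The main obstacle, and the only place where anything beyond routine counting happens, is the term $\bar\psi\,\nabla^2 z$ appearing in $\nabla u_b$: naively $\nabla^2 z$ is of order $\delta^{-2}$, which would ruin every estimate, and it is only the linear vanishing $|\bar\psi|\le K\rho$ together with the confinement $\rho\le\delta$ on the support that lowers it back to the admissible order $\delta^{-1}$. Establishing that vanishing cleanly, uniformly in time and for $\partial_t\bar\psi$ as well, is therefore the crux; everything else is a careful accounting of the powers of $\delta$ contributed by the strip measure, the derivatives of $z$, and the distance weight.
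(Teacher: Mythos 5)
Your proposal is correct, and it supplies exactly the ``straightforward calculations'' that the paper omits (the paper simply cites Kato's argument for this lemma): the decomposition $u_b = z\bar{u} + \bar{\psi}\,\nabla^{\perp}z$, the bounds $|\nabla^k z|\le K\delta^{-k}$ on the strip $\{\rho\le\delta\}$ of measure $O(\delta)$, and the crucial linear vanishing $|\bar{\psi}|\le K\rho$ (also for $\partial_t\bar{\psi}$) are precisely the standard Kato-type bookkeeping intended. In particular, you correctly identify and resolve the only delicate term, $\bar{\psi}\,\nabla^2 z$, whose naive size $\delta^{-2}$ is reduced to $\delta^{-1}$ by the vanishing of the stream function at the boundary.
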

We observe that these estimates follow by straightforward calculations and we omit their proof (cf.\cite{kato}).

\vspace{0.3cm}

We are now ready to give the proof of our main result, Theorem \ref{mainth}.

\begin{proof}[Proof of Theorem \ref{mainth}.]
We start with the observation that, since $u^\alpha \in C([0,T];V\cap (H^3(\o))^2)$. We multiply the Euler-$\alpha$ equations \eqref{2dalphaeuler} by $u^\alpha$ and integrating over time and space, and use the hypotheses \eqref{E1}, we obtain that
\begin{equation}\label{enest}
\|u^{\alpha}\|^2 + \alpha^2 \|\nabla u^{\alpha}\|^2 =
\|u^{\alpha}_0\|^2 + \alpha^2 \|\nabla u^{\alpha}_0\|^2 \leq K.
\end{equation}
%for some $K$ independent of $\alpha$.
%
Since $\D u^\alpha =0$, we have  from \eqref{enest}
\begin{equation} \label{H1estualpha}
\|\C u^\alpha\|= \|\nabla u^\alpha\| \le \frac{K}{\alpha}.
\end{equation}
Recall that $q^{\alpha} = \C (u^\alpha - \alpha^2 \Delta u^\alpha)$, then by theorem 1, \eqref{H1estualpha} and \eqref{E1}, We have, for all $t\in [0,T]$,
\begin{equation*}
\begin{aligned}
 \|q^{\alpha}(t)\| &\le \|q^{\alpha}_0\| \leq \| \C u^\alpha_0\| + \alpha^2 \|u^\alpha_0\|_{H^3}\le\frac{K}{\alpha}.
\end{aligned}
\end{equation*}
by our assumptions \eqref{E1}.
From the above and \eqref{H1estualpha} we have
\begin{equation*}
\begin{aligned}
\alpha^2\|\Delta\,\C u^\alpha\|\le\|q^\alpha\|+\|\C u^\alpha\|\le  \frac{K}{\alpha}.
\end{aligned}
\end{equation*}
Finally, we conclude that, for all $t\in [0,T]$,
 \begin{equation}\label{281}
 \|u^\alpha (t)\|_3\le \frac{K}{\alpha^3},
 \end{equation}
where $K$ is independent of $\alpha$.

Set $W^\alpha=u^\alpha-\bar{u}$, then from \eqref{2dalphaeuler}  and \eqref{2deuler}, $W^\alpha$ satisfies
\begin{equation}\label{28}
\left\{\begin{array}{cl} \partial_t W^\alpha +  ( u^\alpha \cdot\nabla) W^\alpha+(W^\alpha\cdot\nabla)\bar{u}
= & \\ \\
\;\;\;\;\;\;\;\;\;\;\;\;-\ds{\nabla\left(p^\alpha - \bar{p} +\frac{|u^\alpha|^2}{2}\right) + \D \sigma^\alpha}, &\mbox{in}~~\Omega\times (0, T) ,\\ \\ %[3mm]
\D W^\alpha= 0,&\mbox{in}~~\Omega\times (0, T), \\ \\%[3mm]
W^\alpha\cdot\vec {n}=0,  &\mbox{on}~~\p\Omega\times (0, T), \\ \\
W^\alpha(0,x)=u^\alpha_0-u_0, & \mbox{in}~~\Omega,
\end{array}\right.
\end{equation}
where\begin{equation*}
\begin{aligned}
&\D \sigma^\alpha= \alpha^2\p_t\,\Delta u^\alpha+\alpha^2(u^\alpha\cdot\nabla)\Delta u^\alpha+\alpha^2\sum_{j=1}^2(\Delta u^\alpha_j)\nabla u_j^\alpha.
\end{aligned}
\end{equation*}
Multiply \eqref{28} by $W^\alpha$ and integrate on $\Omega\times [0,t]$. After integrating by parts,  we obtain
\begin{equation} \label{Wid}
\begin{aligned}
& \frac{1}{2}\|W^\alpha(t)\|^2 = \frac{1}{2}\|W^\alpha(0)\|^2 - \int_0^t\int_\Omega [(W^\alpha\cdot\nabla)\bar{u}]\cdot W^\alpha\dif x\dif s \\
& + \int_0^t\int_{\Omega}\di \sigma^\alpha\cdot W^\alpha\dif x\dif s,~\mbox{for all}~t\in[0,T].
\end{aligned}
\end{equation}
Clearly the second term on the right-hand side may be estimated by
\begin{equation} \label{secondtermRHS}
\begin{aligned}
\left|\int_0^t\int_\Omega [(W^\alpha\cdot\nabla)\bar{u}]\cdot W^\alpha\dif x\dif s \right| & \le\|\nabla\bar{u}\|_{L^\infty(\Omega\times (0,T))}\int_0^t\|W^\alpha(s)\|^2
\dif s \\
& \leq K\int_0^t\|W^\alpha(s)\|^2 \dif s
\end{aligned}
\end{equation}
We also  have, for every $t\in [0,T]$,
\begin{equation}\label{29}
\begin{aligned}
\int_0^t\int_{\Omega}\D \sigma^\alpha\cdot W^\alpha\dif x\dif s&=\alpha^2\int_0^t\int_\Omega\p_s\Delta u^\alpha\cdot W^\alpha\dif x\dif s\\
&+\alpha^2\int_0^t\int_\Omega[(u^\alpha\cdot\nabla)\Delta u^\alpha]\cdot W^\alpha\dif x\dif s\\
&+\alpha^2\int_0^t\int_\Omega\sum_{j=1}^2(\Delta u^\alpha_j)\nabla u_j^\alpha\cdot W^\alpha\dif x\dif s\\
&=:I_1(t)+I_2(t)+I_3(t).
\end{aligned}
\end{equation}

We will examine each of the terms in \eqref{29}.
We begin by estimating  $I_1(t)$. Notice that the main difficulty arises from the fact that only $\bar{u}\cdot\hat{n}=0$ on $\p\o$,  while the vector field $\bar{u}$ might not vanish on $\p\o$. However, the basic step, as we will see below, in Kato's argument is to consider instead $(\bar{u}-u_b)$.  Therefore, we have:

\begin{equation*}
\begin{aligned}
&I_1(t) =  \alpha^2 \int_0^t\int_\Omega \p_s\Delta u^\alpha\cdot W^\alpha\dif x\dif s=
\alpha^2 \int_0^t \int_{\Omega} \partial_s \Delta u^{\alpha} \cdot u^{\alpha} \dif x \dif s -
\alpha^2 \int_0^t \int_{\Omega} \partial_s \Delta u^{\alpha} \cdot \bar{u} \dif x \dif s\\
&=- \alpha^2 \int_0^t\int_{\Omega} \partial_s \nabla u^\alpha \,\cdot \,\nabla u^\alpha \dif x \dif s
 - \alpha^2 \int_0^t \int_{\Omega} \partial_s \Delta u^{\alpha} \cdot (\bar{u} - u_b) \dif x \dif s- \alpha^2 \int_0^t \int_{\Omega} \partial_s \Delta u^{\alpha} \cdot u_b \dif x \dif s\\
& =-\frac{\alpha^2}{2} \|\nabla u^\alpha (t)\|^2 + \frac{\alpha^2}{2} \|\nabla u^\alpha_0 \|^2+ \alpha^2 \int_0^t \int_{\Omega} \partial_s \nabla u^{\alpha} \,\cdot \,\nabla (\bar{u} - u_b) \dif x \dif s
- \alpha^2 \int_0^t \int_{\Omega} \partial_s \Delta u^{\alpha} \cdot u_b \dif x \dif s \\
&=-\frac{\alpha^2}{2} \|\nabla u^\alpha (t)\|^2 + \frac{\alpha^2}{2} \|\nabla u^\alpha_0 \|^2  - \alpha^2 \int_0^t \int_{\Omega} \nabla u^{\alpha} \,\cdot \,\partial_s \nabla (\bar{u} - u_b) \dif x \dif s\\
&-\alpha^2 \int_{\Omega} \nabla u^{\alpha}_0 \cdot (\nabla \bar{u}_0 - \nabla u_b(0) ) \dif x
+\alpha^2 \int_{\Omega} \nabla u^{\alpha}(t) \cdot (\nabla \bar{u}(t) - \nabla u_b(t) ) \dif x
\\
&+ \alpha^2 \int_0^t \int_{\Omega} \Delta u^{\alpha} \, \cdot \, \partial_s u_b
 \dif x \dif s + \alpha^2 \int_{\Omega} \Delta u^\alpha_0 \cdot u_b(0) \dif x- \alpha^2 \int_{\Omega} \Delta u^\alpha (t) \cdot u_b(t) \dif x .
\end{aligned}
\end{equation*}

With this identity we can estimate $I_1(t)$, for all $t\in[0,T]$,
\begin{align*}
&I_1(t) \leq -\frac{\alpha^2}{2} \|\nabla u^{\alpha}(t)\|^2 +  \frac{\alpha^2}{2} \|\nabla u^{\alpha}_0\|^2- \alpha^2\int_{\Omega} \nabla u_0^{\alpha}\cdot \nabla \bar{u}_0 \dif x + \alpha^2 \|\nabla u_0^\alpha\| \|\nabla u_b(0)\| \\
&+ \alpha^2 \|\nabla u^\alpha(t) \| \|\nabla \bar{u} (t)\| + \alpha^2 \|\nabla u^\alpha (t) \| \|\nabla u_b(t)\|+\alpha^2 \|\Delta u^{\alpha}_0\|\|u_b(0)\| + \alpha^2 \|\Delta u^{\alpha}(t)\|\|u_b(t)\| \\
&+\alpha^2 \int_0^t \|\nabla u^{\alpha} \| \|\partial_s \nabla \bar{u}\| \dif s
+ \alpha^2\int_0^t \|\nabla u^{\alpha} \| \|\partial_s \nabla u_b \| \dif s
+ \alpha^2\int_0^t \| \Delta u^{\alpha} \| \|\partial_s u_b \| \dif s
\\
& \leq -\frac{\alpha^2}{2} \|\nabla u^{\alpha}(t)\|^2 +  \frac{\alpha^2}{2} \|\nabla u^{\alpha}_0\|^2
- \alpha^2\int_{\Omega} \nabla u_0^{\alpha}\cdot \nabla \bar{u}_0 \dif x +K \alpha^2 \delta^{-1/2} \|\nabla u_0^\alpha\| \\
&+ \frac{\alpha^2}{16} \|\nabla u^\alpha(t) \|^2 +  K \alpha^2\|\nabla \bar{u} (t)\|_{L^{\infty}((0,T);L^2)}^2 + \frac{\alpha^2}{16} \|\nabla u^\alpha(t) \|^2 +  K \alpha^2\delta^{-1}\\
&+\alpha^2 \|\Delta u^{\alpha}_0\|\|u_b(0)\| + \alpha^2 \|\Delta u^{\alpha}(t)\|\|u_b(t)\| \\
&+\alpha^2 \int_0^t \|\nabla u^{\alpha} \| \|\partial_s \nabla \bar{u}\| \dif s
+ \alpha^2\int_0^t \|\nabla u^{\alpha} \| \|\partial_s \nabla u_b \| \dif s
+ \alpha^2\int_0^t \| \Delta u^{\alpha} \| \|\partial_s u_b \| \dif s,
\end{align*}
where we have used above Young's inequality together with the estimates from Lemma \ref{ubest}.

Next, we recall the following inequality for functions in $H^3$:
\begin{equation}\label{interinq}
 \|\Delta f \| \leq K \|\nabla f \|^{1/2}\|f\|_{H^3}^{1/2}.
 \end{equation}

Let us continue to bound $I_1$. We use \eqref{interinq} and the fact that $\bar{u}\in C^1([0,T]; H^2(\o))$ to obtain,  for all $t\in [0,T]$
\begin{align*}
I_1(t)  &\leq -\frac{\alpha^2}{2} \|\nabla u^{\alpha}(t)\|^2 +  \frac{\alpha^2}{2} \|\nabla u^{\alpha}_0\|^2- \alpha^2\int_{\Omega} \nabla u_0^{\alpha}\cdot \nabla \bar{u}_0 \dif x +K \alpha^2 \delta^{-1/2} \|\nabla u_0^\alpha\| \\
&+ \frac{\alpha^2}{16} \|\nabla u^\alpha(t) \|^2 +  K \alpha^2\|\nabla \bar{u} (t)\|_{L^{\infty}((0,T);L^2)}^2 + \frac{\alpha^2}{16} \|\nabla u^\alpha(t) \|^2 +  K \alpha^2\delta^{-1}\\
&+K\alpha^2 \|\nabla u^{\alpha}_0\|^{1/2} \|u_0^\alpha\|_{H^3}^{1/2} \|u_b(0)\| +K \alpha^2 \|\nabla u^{\alpha}(t)\|^{1/2}\|u^\alpha(t) \|_{H^3}^{1/2}\|u_b(t)\|, \\
&+\alpha^2 \int_0^t \|\nabla u^{\alpha} \|^2 \dif s + \alpha^2\int_0^t  \|\partial_s \nabla \bar{u}\|^2 \dif s+ \alpha^2\int_0^t \|\nabla u^{\alpha} \|^2 \dif s \\
 &+ \alpha^2\int_0^t  \|\partial_s \nabla u_b \|^2\dif s+ K\alpha^2\int_0^t \| \nabla u^{\alpha} \|^{1/2} \| u^{\alpha} \|_{H^3}^{1/2}  \|\partial_s u_b \| \dif s.
\end{align*}
Using \eqref{H1estualpha} and \eqref{281} together with estimates from Lemma \ref{ubest}, we obtain
\begin{align*}
I_1(t)&\leq -\frac{\alpha^2}{2} \|\nabla u^{\alpha}(t)\|^2 +  \frac{\alpha^2}{2} \|\nabla u^{\alpha}_0\|^2- \alpha^2\int_{\Omega} \nabla u_0^{\alpha}\cdot \nabla \bar{u}_0 \dif x +K \alpha^2 \delta^{-1/2} \frac{1}{\alpha} \\
&+ \frac{\alpha^2}{16} \|\nabla u^\alpha(t) \|^2 +  K \alpha^2\|\nabla \bar{u} (t)\|_{L^{\infty}((0,T);L^2)}^2 + \frac{\alpha^2}{16} \|\nabla u^\alpha(t) \|^2 +  K \alpha^2\delta^{-1}\\
&+K\alpha^2 \left(\frac{1}{\alpha} \right)^{1/2} \left(\frac{1}{\alpha^3}\right)^{1/2} \delta^{1/2} + \frac{\alpha^2}{8} \|\nabla u^{\alpha}(t)\|^2 + K\alpha^2 \|u^\alpha(t) \|_{H^3}^{2/3}\|u_b(t)\|^{4/3} \\
&+\alpha^2 \int_0^t \|\nabla u^{\alpha} \|^2 \dif s + \alpha^2\int_0^t  \|\partial_s \nabla \bar{u}\|^2 \dif s+ \alpha^2\int_0^t \|\nabla u^{\alpha} \|^2 \dif s +  \alpha^2\int_0^t  \|\partial_s \nabla u_b \|^2\dif s\\
&+ \alpha^2\int_0^t \| \nabla u^{\alpha} \|^2 \dif s + K\alpha^2 \int_0^t \| u^{\alpha} \|_{H^3}^{2/3}  \|\partial_s u_b \|^{4/2} \dif s.
\end{align*}
Therefore, coalescing similar terms we obtain, for all $t\in [0,T]$,
\begin{align*}
I_1(t) &\leq -\frac{\alpha^2}{4} \|\nabla u^{\alpha}(t)\|^2 +  \frac{\alpha^2}{2} \|\nabla u^{\alpha}_0\|^2- \alpha^2\int_{\Omega} \nabla u_0^{\alpha}\cdot \nabla \bar{u}_0 \dif x + K \alpha \delta^{-1/2} \\
&+ K\alpha^2 + K \alpha^2 \delta^{-1} +  K \delta^{1/2} + K\alpha^2 \left( \frac{1}{\alpha^3} \right)^{2/3}(\delta^{1/2})^{4/3}\\
&+ K \alpha^2\int_0^t \|\nabla u^{\alpha}\|^2 \dif s + K\alpha^2 T +  K \alpha^2 T \delta^{-1} + K \alpha^2 T \left( \frac{1}{\alpha^3} \right)^{2/3}(\delta^{1/2})^{4/3}.
\end{align*}

Thus, for all $t\in [0,T],$ we have
\begin{equation} \label{I1est}
\begin{aligned}
&I_1(t) \leq -\frac{\alpha^2}{4} \|\nabla u^{\alpha}(t)\|^2 +  \frac{\alpha^2}{2} \|\nabla u^{\alpha}_0\|^2 +K \alpha^2\int_0^t \|\nabla u^{\alpha}\|^2 \dif s
\\
&- \alpha^2\int_{\Omega} \nabla u_0^{\alpha}\cdot \nabla \bar{u}_0 \dif x +K \alpha\delta^{-1/2}  + K\alpha^2 + K \alpha^2\delta^{-1}+K\delta^{1/2}  + K\delta^{2/3}\\
& = -\frac{\alpha^2}{4} \|\nabla u^{\alpha}(t)\|^2 +\frac{\alpha^2}{2} \|\nabla u^{\alpha}_0\|^2+ K \alpha^2\int_0^t \|\nabla u^{\alpha}\|^2 \dif s  + g(\alpha, u_0^{\alpha},\bar{u}_0),
\end{aligned}
\end{equation}
with
\begin{align*}
g(\alpha,u_0^\alpha,\bar{u}_0) &=  - \alpha^2\int_{\Omega} \nabla u_0^{\alpha}\cdot \nabla \bar{u}_0 \dif x+ K\alpha^2 +
 K \alpha\delta^{-1/2}   + K \alpha^2\delta^{-1}+K\delta^{1/2}  + K\delta^{2/3}.
 \end{align*}
Now, we choose $\delta=\delta(\alpha)$ such that
\begin{equation}\label{deltadata}
\delta(\alpha)\to 0~\mbox{and}~\frac{\alpha^2}{\delta(\alpha)}\to 0,~\mbox{as}~\alpha\to 0.
\end{equation}
Therefore, it follows from  the assumption \eqref{deltadata} and the hypotheses of  Theorem \ref{mainth} that
\begin{equation}\label{E2}
g(\alpha,u_0^\alpha,\bar{u}_0) \to 0, \;\;\text{ as } \;\;\alpha \to 0.
\end{equation}

Next, we examine $I_2$ and $I_3$. We start by noticing, after integrating by parts, that, for all $t\in [0,T],$
\begin{align*}
I_2(t) + I_3(t) &:= \alpha^2\int_0^t\int_\Omega[(u^\alpha\cdot\nabla)\Delta u^\alpha]\cdot W^\alpha\dif x\dif s +\alpha^2\int_0^t\int_\Omega\sum_{j=1}^2(\Delta u^\alpha_j)\nabla u_j^\alpha\cdot W^\alpha\dif x\dif s \\
&= \alpha^2\int_0^t\int_\Omega[(u^\alpha\cdot\nabla)\Delta u^\alpha]\cdot u^\alpha \dif x\dif s
- \alpha^2\int_0^t\int_\Omega[(u^\alpha\cdot\nabla)\Delta u^\alpha]\cdot \bar{u} \dif x\dif s\\
&+\alpha^2\int_0^t\int_\Omega\sum_{j=1}^2(\Delta u^\alpha_j)\nabla u_j^\alpha\cdot u^\alpha \dif x\dif s
- \alpha^2\int_0^t\int_\Omega\sum_{j=1}^2(\Delta u^\alpha_j)\nabla u_j^\alpha\cdot \bar{u} \dif x\dif s\\
&=\alpha^2\int_0^t\int_\Omega[(u^\alpha\cdot\nabla)\Delta u^\alpha]\cdot u^\alpha \dif x\dif s
+\alpha^2\int_0^t\int_\Omega \Delta u^\alpha \cdot [(u^\alpha \cdot \nabla) u^\alpha] \dif x\dif s\\
&- \alpha^2\int_0^t\int_\Omega[(u^\alpha\cdot\nabla)\Delta u^\alpha]\cdot \bar{u} \dif x\dif s
- \alpha^2\int_0^t\int_\Omega\sum_{j=1}^2(\Delta u^\alpha_j)\nabla u_j^\alpha\cdot \bar{u} \dif x\dif s.
\end{align*}
Notice that since $\D u^\alpha=0$ and $u^\alpha$ vanishes on  $\p\Omega$, we can integrate by parts to show that
$$
\alpha^2\int_0^t\int_\Omega[(u^\alpha\cdot\nabla)\Delta u^\alpha]\cdot u^\alpha \dif x\dif s
+\alpha^2\int_0^t\int_\Omega \Delta u^\alpha \cdot [(u^\alpha \cdot \nabla) u^\alpha] \dif x\dif s =0.
$$
As a result of all the above we have
\begin{align*}
I_2+I_3&=- \alpha^2\int_0^t\int_\Omega[(u^\alpha\cdot\nabla)\Delta u^\alpha]\cdot \bar{u} \dif x\dif s
- \alpha^2\int_0^t\int_\Omega\sum_{j=1}^2(\Delta u^\alpha_j)\nabla u_j^\alpha\cdot \bar{u} \dif x\dif s\\
&=: I_2'(t) + I_3'(t).
\end{align*}

We now estimate $I_2'(t)$, for all $t\in [0,T]$,
\begin{align*}
&I_2'(t) = - \alpha^2\int_0^t\int_\Omega[(u^\alpha\cdot\nabla)\Delta u^\alpha]\cdot \bar{u} \dif x\dif s = \alpha^2\int_0^t\int_\Omega \Delta u^\alpha\cdot [(u^\alpha\cdot\nabla) \bar{u}] \dif x\dif s\\
&= -\alpha^2\int_0^t\int_\Omega \sum_{k=1}^2 \partial_k u^\alpha \cdot \partial_k [(u^\alpha\cdot\nabla) \bar{u}] \dif x\dif s\\
&= -\alpha^2\int_0^t\int_\Omega \sum_{k=1}^2 \partial_k u^\alpha \cdot [(\partial_k u^\alpha\cdot\nabla) \bar{u}] \dif x\dif s-\alpha^2\int_0^t\int_\Omega \sum_{k=1}^2 \partial_k u^\alpha \cdot [( u^\alpha\cdot\nabla) \partial_k\bar{u} ]\dif x\dif s.
\end{align*}
Using the fact that $\bar{u}\in C([0,T];(H^3(\o))^2)\cap C^1([0,T]; (H^2(\o))^2)$, we obtain, for all $t\in [0,T],$
\begin{equation*}
\begin{aligned}
I'_2(t)&\leq \alpha^2 \|\nabla \bar{u}\|_{L^\infty((0,T)\times\Omega)} \int_0^t \|\nabla u^\alpha(s)\|^2\dif s +
\alpha^2\int_0^t \|\nabla u^{\alpha}(s) \|\|u^\alpha(s)\|_{L^4} \|D^2 \bar{u}(s)\|_{L^4} \dif s
\\
&\leq K\alpha^2 \|\bar{u}\|_{L^\infty((0,T);H^3)} \int_0^t \|\nabla u^\alpha\|^2 \dif s +
K\alpha^2\int_0^t \|u^\alpha\|^{1/2} \|\nabla u^\alpha \|^{3/2} \|D^2 \bar{u}\|_{L^4} \dif s,\\
&\leq K\alpha^2 \|\bar{u}\|_{L^\infty((0,T);H^3)} \int_0^t \|\nabla u^\alpha\|^2 \dif s +
K \alpha^2\|\bar{u}\|_{L^\infty((0,T);H^3)}^4 \int_0^t \|u^\alpha\|^2 \dif s\\
&+ K\alpha^2 \int_0^t \|\nabla u^\alpha \|^2 \dif s,
\end{aligned}
\end{equation*}
where we used  the 2D-Ladyzhenskaya inequality followed by Young's inequality in the last bound.
Hence we find, after piecing together similar terms that, for every $t\in [0,T]$, we have

\begin{equation} \label{I2primeest}
I_2'(t) \leq K \alpha^2  \int_0^t \|\nabla u^\alpha\|^2 \dif s +
K \alpha^2 T.
\end{equation}

Finally, we turn to $I_3'$. Here again we will not be able to  integrate by parts, since  only $\bar{u}\cdot\hat{n}=0$ on $\p\o$, while the vector field $\bar{u}$ might not vanish on $\p\o$. To remedy this situation we consider instead the vector field $\bar{u}-u_b,$ where we have explicit understanding, thanks to Lemma \ref{ubest} of the behavior of $u_b$ at $\p\o.$ Thus we have
\begin{equation*}
\begin{aligned}
I_3' (t)&= - \alpha^2\int_0^t\int_\Omega\sum_{j=1}^2(\Delta u^\alpha_j)\nabla u_j^\alpha\cdot \bar{u} \dif x\dif s\\
&= - \alpha^2\int_0^t\int_\Omega\sum_{j=1}^2(\Delta u^\alpha_j)\nabla u_j^\alpha\cdot (\bar{u} - u_b) \dif x\dif s
 - \alpha^2\int_0^t\int_\Omega\sum_{j=1}^2(\Delta u^\alpha_j)\nabla u_j^\alpha\cdot u_b \dif x\dif s \\
&=:J_1(t) + J_2(t).
\end{aligned}
\end{equation*}

Note that, for every $t\in[0,T],$ we have
\begin{equation*}
\begin{aligned}
&J_1(t) := - \alpha^2\int_0^t\int_\Omega\sum_{j=1}^2(\Delta u^\alpha_j)\nabla u_j^\alpha\cdot (\bar{u} - u_b) \dif x\dif s\\
&=  \alpha^2\int_0^t\int_\Omega\sum_{j,k=1}^2(\partial_k u^\alpha_j)\nabla u_j^\alpha\cdot \partial_k(\bar{u} - u_b) \dif x\dif s
+ \alpha^2\int_0^t\int_\Omega\sum_{j,k=1}^2(\partial_k u^\alpha_j)\partial_k \nabla u_j^\alpha\cdot (\bar{u} - u_b) \dif x\dif s
\\
&=  \alpha^2\int_0^t\int_\Omega\sum_{j,k=1}^2(\partial_k u^\alpha_j)\nabla u_j^\alpha\cdot \partial_k(\bar{u} - u_b) \dif x\dif s
+ \alpha^2\int_0^t\int_\Omega\sum_{j,k=1}^2(\bar{u} - u_b)\cdot \nabla \left[\frac{|\partial_k u_j^\alpha|^2}{2} \right]\dif x\dif s
\\
&=  \alpha^2\int_0^t\int_\Omega\sum_{j,k=1}^2(\partial_k u^\alpha_j)\nabla u_j^\alpha\cdot \partial_k \bar{u}  \dif x\dif s  -\alpha^2\int_0^t\int_\Omega\sum_{j,k=1}^2(\partial_k u^\alpha_j)\nabla u_j^\alpha\cdot \partial_k u_b \dif x\dif s\\
&\leq \alpha^2 \|\nabla \bar{u}\|_{L^\infty((0,T)\times\Omega)}\int_0^t \|\nabla u^\alpha \|^2 \dif s
-\alpha^2\int_0^t\int_\Omega\sum_{\ell,j,k=1}^2(\partial_k u^\alpha_j)\,(\partial_{\ell} u_j^\alpha)\cdot \partial_k (u_b)_{\ell} \dif x\dif s.
\end{aligned}
\end{equation*}
Therefore, after integrating by parts we obtain
\begin{equation*}
\begin{aligned}
J_1(t) &\le K \alpha^2\|\bar{u}\|_{L^\infty((0,T);H^3)}\int_0^t \|\nabla u^\alpha \|^2 \dif s
+\alpha^2\int_0^t\int_\Omega\sum_{\ell,j,k=1}^2(\partial_{\ell}\partial_k u^\alpha_j) \,u_j^\alpha \partial_k (u_b)_{\ell} \dif x\dif s\\
&= K\alpha^2\|\bar{u}\|_{L^\infty((0,T);(H^3(\o))^2)}\int_0^t \|\nabla u^\alpha \|^2 \dif s\\
&-\alpha^2\int_0^t\int_\Omega\sum_{\ell,j,k=1}^2(\partial_{\ell}\partial_k \partial_k u^\alpha_j) \,u_j^\alpha (u_b)_{\ell} \dif x\dif s
-\alpha^2\int_0^t\int_\Omega\sum_{\ell,j,k=1}^2(\partial_{\ell} \partial_k u^\alpha_j) \,\partial_k u_j^\alpha (u_b)_{\ell} \dif x\dif s\\
&= K\alpha^2\|\bar{u}\|_{L^\infty((0,T);H^3)}\int_0^t \|\nabla u^\alpha \|^2 \dif s\\
&-\alpha^2\int_0^t\int_\Omega\sum_{\ell,j,k=1}^2(\partial_{\ell}\partial_k \partial_k u^\alpha_j) \,u_j^\alpha (u_b)_{\ell} \dif x\dif s
-\alpha^2\int_0^t\int_\Omega \sum_{j,k=1}^2 (u_b \cdot \nabla) \left[\frac{|\partial_k u^\alpha_j|^2}{2}\right] \dif x\dif s\\
&= K\alpha^2\|\bar{u}\|_{L^\infty((0,T);H^3)}\int_0^t \|\nabla u^\alpha \|^2 \dif s-\alpha^2\int_0^t\int_\Omega\sum_{\ell,j,k=1}^2(\partial_{\ell}\partial_k \partial_k u^\alpha_j) \,u_j^\alpha\cdot \,(u_b)_{\ell} \dif x\dif s
\end{aligned}
\end{equation*}
\begin{equation*}
\begin{aligned}
&=K \alpha^2\|\bar{u}\|_{L^\infty((0,T);H^3)}\int_0^t \|\nabla u^\alpha \|^2 \dif s+\alpha^2\int_0^t\int_\Omega\sum_{\ell,j,k=1}^2(\partial_k \partial_k u^\alpha_j) \,\partial_{\ell} u_j^\alpha \,(u_b)_{\ell} \dif x\dif s
\\
&=K \alpha^2\|\bar{u}\|_{L^\infty((0,T);H^3)}\int_0^t \|\nabla u^\alpha(s) \|^2 \dif s
+\alpha^2\int_0^t\int_\Omega (\Delta u^\alpha_j )\,\nabla u_j^\alpha\cdot u_b \dif x\dif s.
\end{aligned}
\end{equation*}
Consequently,
\[J_1(t) \leq  K \alpha^2 \int_0^t \|\nabla u^\alpha(s) \|^2 \dif s -J_2(t),
\]
for all $t\in [0,T]$.
As a result, we have obtained that
\begin{equation} \label{I3primeest}
I_3'(t) = J_1(t) + J_2(t) \leq K \alpha^2 \int_0^t \|\nabla u^\alpha(s) \|^2 \dif s,
\end{equation}
for all $t\in [0,T]$.
Recalling \eqref{29} and putting together the estimates in \eqref{I1est}, \eqref{I2primeest} and \eqref{I3primeest} we deduce that
\begin{equation} \label{RHSest}
\begin{aligned}
&\int_0^t\int_{\Omega}\D \sigma^\alpha\cdot W^\alpha\dif x\dif s =I_1(t)+I_2(t)+I_3(t) \\
&\leq -\frac{\alpha^2}{4} \|\nabla u^{\alpha}(t)\|^2 + K \alpha^2\int_0^t \|\nabla u^{\alpha}\|^2 \dif s  + g(\alpha, u_0^{\alpha},\bar{u}_0)\\
& + K \alpha^2  \int_0^t \|\nabla u^\alpha\|^2 \dif s +
K \alpha^2 T + K \alpha^2 \int_0^t \|\nabla u^\alpha \|^2 \dif s.
\end{aligned}
\end{equation}

We insert \eqref{secondtermRHS} and \eqref{RHSest} into \eqref{Wid} to conclude

\begin{equation} \label{West}
\begin{aligned}
& \frac{1}{2}\|W^\alpha(t)\|^2 \leq  \frac{1}{2}\|W^\alpha(0)\|^2 + K\int_0^t\|W^\alpha\|^2 \dif s \\
& -\frac{\alpha^2}{4} \|\nabla u^{\alpha}(t)\|^2 + K \alpha^2\int_0^t \|\nabla u^{\alpha}\|^2 \dif s + \frac{\alpha^2}{2} \|\nabla u^{\alpha}_0\|^2 +  g(\alpha, u_0^{\alpha},\bar{u}_0) \\
&+ K \alpha^2  \int_0^t \|\nabla u^\alpha\|^2 \dif s +
K \alpha^2 T + K \alpha^2 \int_0^t \|\nabla u^\alpha \|^2 \dif s.
\end{aligned}
\end{equation}
We can rewrite \eqref{West} as
\begin{equation} \label{West2}
\begin{aligned}
& \|W^\alpha(t)\|^2 + \alpha^2\|\nabla u^{\alpha}(t)\|^2 \leq  K_1(\|W^\alpha(0)\|^2 + \alpha^2 \|\nabla u^{\alpha}_0\|^2)\\
& + K_2\int_0^t (\|W^\alpha\|^2 + \alpha^2 \|\nabla u^{\alpha}\|^2 )\dif s +  \tilde{g}(\alpha, u_0^{\alpha},\bar{u}_0),
\end{aligned}
\end{equation}
where
\begin{equation}\label{tg}
\tilde{g}(\alpha, u_0^{\alpha},\bar{u}_0)=g(\alpha, u_0^{\alpha},\bar{u}_0)+
KT \alpha^2.\end{equation}
Applying Gronwall's lemma to \eqref{West2},  we obtain
 \begin{equation*}
 \begin{aligned}
 \sup_{t\in[0,T]}\left(\|W^\alpha(t)\|^2+\alpha^2\|\nabla u^\alpha(t)\|^2\right)\le e^{K_2T}\left[K_1(\|W^\alpha(0)\|^2+\alpha^2\|\nabla u^\alpha_0\|^2)+\tilde{g}(\alpha, u_0^{\alpha},\bar{u}_0)\right],
 \end{aligned}
 \end{equation*}
 where $K_1,K_2$ do not depend on $\alpha$.

 Thanks to \eqref{E1}, \eqref{E2} and \eqref{tg},  we conclude that
 $$\sup_{t\in (0,T)}(\|u^\alpha(t)-\bar{u}(t)\|^2+\alpha^2\|\nabla u^\alpha(t)\|^2)\to 0,$$ as $\alpha\to 0$.

 \end{proof}

\section{Comments and conclusions}

In our main result, Theorem \ref{mainth}, we assume that the initial data for the Euler equations belongs to $(H^3(\Omega))^2$, is divergence free and satisfies $u_0 \cdot \hat{n}=0$. In addition, we postulate the existence of a suitable family of approximations to $u_0$, i.e. a family of approximations verifying \eqref{E1}, $\{u_0^{\alpha}\} \subset (H^3(\Omega))^2$. A natural question which arises is whether such approximations exist for any, given, $u_0$ as above. We begin this section by providing a construction of such an approximation. In fact, in the following result, concerning the construction of $u^\alpha_0$, we require considerably less regularity from $u_0$.

\begin{Proposition}
Let $u_0\in H\cap (H^1(\Omega))^2$. Then there exists a suitable family of approximations to $u_0$, $\{u_0^\alpha\}$ satisfying \eqref{E1}.
\end{Proposition}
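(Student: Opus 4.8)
The plan is to build $u_0^\alpha$ by truncating the expansion of $u_0$ in the eigenfunctions of the Stokes operator. Let $A = -P\Delta$ be the Stokes operator on $H$, with $P$ the Leray--Helmholtz projector and domain $D(A) = (H^2(\Omega))^2 \cap V$. Since $\Omega$ is smooth and bounded, $A$ is self-adjoint and positive with compact inverse, so it has a nondecreasing sequence of eigenvalues $0 < \mu_1 \le \mu_2 \le \cdots \to \infty$ and an associated orthonormal basis $\{w_k\}_{k\ge 1}$ of $H$; each $w_k$ is smooth and, belonging to $V \subset (H^1_0(\Omega))^2$, vanishes on $\partial\Omega$. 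Writing $c_k = (u_0, w_k)$, I would set $N(\alpha) = \max\{N : \mu_N \le \alpha^{-2}\}$ and define
\[
u_0^\alpha = \sum_{k=1}^{N(\alpha)} c_k\, w_k .
\]
Because $\mu_k \to \infty$, we have $N(\alpha) \to \infty$ as $\alpha \to 0$, while $\mu_{N(\alpha)} \le \alpha^{-2}$ by construction.

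Conditions (i) and (ii) are then immediate. Each $w_k$ vanishes on $\partial\Omega$, hence so does the finite sum $u_0^\alpha$, giving (i); and since $\{w_k\}$ is an orthonormal basis of $H$ and $u_0 \in H$, the truncations $u_0^\alpha$ converge to $u_0$ in $L^2(\Omega)$ as $N(\alpha) \to \infty$, giving (ii).

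The heart of the matter is (iii). As $u_0$ need not vanish on $\partial\Omega$, in general $u_0 \notin D(A^{1/2}) = V$, so $\|\nabla u_0^\alpha\|^2 = \sum_{k\le N(\alpha)} \mu_k c_k^2$ does blow up as $\alpha \to 0$; the task is to show it grows strictly slower than $\alpha^{-2}$. I would prove the refinement $\sum_{k=1}^N \mu_k c_k^2 = o(\mu_N)$ as $N \to \infty$, which uses only $u_0 \in H$. Fixing $\varepsilon > 0$ and choosing $M$ with $\sum_{k>M} c_k^2 < \varepsilon$ (possible since $\sum_k c_k^2 = \|u_0\|^2 < \infty$), I split
\[
\frac{1}{\mu_N}\sum_{k=1}^N \mu_k c_k^2 \le \frac{\mu_M}{\mu_N}\sum_{k=1}^M c_k^2 + \sum_{k=M+1}^N c_k^2 \le \frac{\mu_M}{\mu_N}\|u_0\|^2 + \varepsilon ,
\]
and let $N \to \infty$ with $M$ fixed, so that the first term vanishes. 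Combining this with $\mu_{N(\alpha)} \le \alpha^{-2}$ gives $\alpha^2\|\nabla u_0^\alpha\|^2 \le \alpha^2 \mu_{N(\alpha)}\cdot o(1) \le o(1)$ as $\alpha \to 0$, that is $\|\nabla u_0^\alpha\| = o(\alpha^{-1})$, which is (iii). This is precisely where the hypothesis on $u_0$ enters: even $u_0 \in H$ suffices, so the stated $H^1$ regularity is more than ample.

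Finally, (iv) follows from a crude spectral bound. By the Cattabriga--Solonnikov regularity estimate for the Stokes system (in the spirit of the bounds used in Lemma \ref{ExistStream}), $\|u_0^\alpha\|_{H^3} \le K\bigl(\|A u_0^\alpha\|_{H^1} + \|u_0^\alpha\|\bigr)$; since $A u_0^\alpha = \sum_{k\le N(\alpha)} \mu_k c_k w_k \in V$, one has $\|\nabla(Au_0^\alpha)\| = \|A^{1/2}(Au_0^\alpha)\|$, whence
\[
\|A u_0^\alpha\|_{H^1}^2 \le K\sum_{k=1}^{N(\alpha)}(\mu_k^3+\mu_k^2)\, c_k^2 \le K\,\mu_{N(\alpha)}^3\,\|u_0\|^2 \le K\alpha^{-6}\|u_0\|^2 ,
\]
so that $\|u_0^\alpha\|_{H^3} = \mathcal{O}(\alpha^{-3})$. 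The main obstacle throughout is the tension between (ii), which forces $N(\alpha)\to\infty$, and (iii)--(iv), which cap the growth of $N(\alpha)$; the threshold $\mu_{N(\alpha)} \le \alpha^{-2}$ resolves it, the delicate step being the passage from a mere $\mathcal{O}(\alpha^{-1})$ to the required $o(\alpha^{-1})$ bound in (iii), supplied by the $o(\mu_N)$ refinement.
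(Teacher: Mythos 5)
Your proposal is correct, and while your construction is identical to the paper's (truncation of the Stokes eigenfunction expansion at spectral level $\alpha^{-2}$, with (i), (ii) and (iv) handled essentially as the paper does), your proof of the crucial condition (iii) takes a genuinely different route. The paper writes $\|\nabla u_0^\alpha\|^2 = (u_0, (-\Delta) P_m u_0)$, integrates by parts to expose the boundary term $\int_{\partial\Omega} u_0 \cdot \frac{\partial u_0^\alpha}{\partial \hat{n}}\,d\Gamma$, and controls it via the trace inequality $\|f\|_{L^2(\partial\Omega)}^2 \le K\|f\|\,\|f\|_1$ together with Young's inequality; this is precisely where the hypothesis $u_0 \in (H^1(\Omega))^2$ enters, and it buys a quantitative rate, namely $\alpha^2\|\nabla u_0^\alpha\|^2 \le K(\alpha^2 + \alpha^{2/3})\|u_0\|_1^2$, i.e. $\|\nabla u_0^\alpha\| = \mathcal{O}(\alpha^{-2/3})$. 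Your argument replaces all of this with the purely spectral head/tail estimate $\sum_{k\le N}\mu_k c_k^2 = o(\mu_N)$, which needs only $u_0 \in H$; it therefore proves the proposition under a strictly weaker hypothesis than stated (interestingly, this is consistent with the paper's closing remark that \eqref{E1} may be a technical limitation rather than a sharp requirement), at the price of yielding no rate beyond the bare $o(\alpha^{-1})$ demanded by \eqref{E1}. A minor further advantage of your version: by defining $N(\alpha) = \max\{N : \mu_N \le \alpha^{-2}\}$ you get $\mu_{N(\alpha)} \le \alpha^{-2}$ by fiat, so you never need the Weyl-type asymptotics $\lambda_j \thicksim j\lambda_1$ (cited from \cite{I, Me}) that the paper uses to relate its choice $m = \lfloor 1/(\alpha^2\lambda_1)\rfloor$ to the size of $\lambda_m$. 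Both proofs lean on unproved but standard Stokes elliptic regularity for the $H^3$ bound in (iv) — the paper through the norm equivalence $\|u\|_{H^s} \le K\|A^{s/2}u\|$, you through the Cattabriga-type estimate $\|u\|_{H^3} \le K(\|Au\|_{H^1} + \|u\|)$ — so neither has an advantage there.
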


\begin{proof}
  Let us denote by $P_\sigma$ the Leray-Helmholtz projector operator, i.e.  the orthogonal projection from $(L^2(\Omega))^2$ onto $H$.  We denote by $A= P_{\sigma}(-\Delta)$ the Stokes operator, with $D(A)=(H^2(\o))^2\cap V$. It is well known that the space $H$ possesses an orthonormal  basis $\{w_j\}_{i=1}^\infty$ of eigenfunctions of $A$, with corresponding eigenvalues $\lambda_j, j=1,2,\cdots,$ i.e. $Aw_j=\lambda_j w_j$ (cf.\cite{cf}). Moreover, it is well known that $\lambda_j \thicksim j\lambda_1$, for $j=1,2,\cdots,$ see, e.g., \cite{I, Me}. Let us set $H_m= \mbox{span} \{w_1,w_2,\cdots, w_m\}$ and  by     $P_m$ to be the orthogonal projection from $H$ onto $H_m$.

 Let $u_0\in H^1(\o)\cap H$, we set
 \[u_0^\alpha=P_m u_0=\sum_{i=1}^m(u_0,w_j)w_j,\]
 where we choose $m= \lfloor\frac{1}{\alpha^2\lambda_1}\rfloor$.

It is clear that $\|u^\alpha_0-u_0\|\to 0$, as $\alpha\to 0$, and that
$u^{\alpha}_0 = 0$ on $\partial \Omega$. Therefore, conditions $(i)$ and $(ii)$ of \eqref{E1} are met.

We observe that for every $s\ge 0$, there exists a constant $K>0$, which depends on $s$, but is  independent of $\alpha$, so that
\begin{equation}\label{hnorm}
\|u^\alpha_0\|^2_{H^s}\le K \sum_{j=1}^m\lambda_j^s|(u_0,w_j)|^2\le K\lambda_m^s\|u_0\|^2\le K\alpha^{-2s}\|u_0\|^2.
\end{equation}
Setting $s=3$ in \eqref{hnorm} implies condition  $(iv)$ of \eqref{E1}.

All that remains  to verify is condition $(iii)$ of \eqref{E1}.  Observe that
\begin{equation}\label{ithird}
\begin{aligned}
\|\nabla u^\alpha_0\|^2&=\|\nabla P_m u_0\|^2=\|{A}^\frac{1}{2}  P_m u_0\|^2=(P_m u_0,{ A} P_m u_0)\\[2mm]
&=(u_0, P_m {  A} P_m u_0)=(u_0, { A} P_m u_0)=(u_0, P_\sigma(-\Delta) P_m u_0)\\
&=(u_0, (-\Delta) P_m u_0)=\int_\o (\nabla u_0:\nabla u^\alpha_0)\dif x-\int_{\p\o}u_0\cdot\frac{\p u^\alpha_0}{\p\hat{n}}\dif\Gamma\\
&\le\|\nabla u_0\|\|\nabla u^\alpha_0\| + \|u_0\|_{L^2(\partial \o)}\|\nabla u^\alpha_0\|_{L^2(\partial \o)}\\
&\le\|\nabla u_0\|\|\nabla u^\alpha_0\|+K\|u_0\|_1\|\nabla u^\alpha_0\|^\frac{1}{2}\|u^\alpha_0\|_2^\frac{1}{2},
\end{aligned}
\end{equation}
where the last inequality is obtained by using the following boundary trace inequality \cite{ga2}
$$\|f\|^2_{L^2(\p\o)}\le K\|f\|\|  f\|_1.$$
By virtue of Young's inequality, \eqref{ithird} implies
\begin{equation}\label{ythird}
\begin{aligned}
\alpha^2\|\nabla u^\alpha_0\|^2\le K\alpha^2\|\nabla u_0\|^2+K\alpha^2\|u^\alpha_0\|_2^\frac{2}{3}\|u_0\|_1^\frac{4}{3}.
\end{aligned}
\end{equation}
Using \eqref{hnorm}, for $s=2,$ we find that
$$\alpha^2\|u^\alpha_0\|_2^\frac{2}{3}\|u_0\|_1^\frac{4}{3}\le K\alpha^\frac{2}{3}\|u_0\|^\frac{2}{3}\|u_0\|_1^\frac{4}{3}\le K\alpha^\frac{2}{3}\|u_0\|_1^2.$$
Thus it follows from the above and \eqref{ythird} that
$$\alpha^2\|\nabla u^\alpha_0\|^2\le K(\alpha^2+\alpha^\frac{2}{3})\| u_0\|_1^2.$$
Hence, we obtain $(iii)$ of \eqref{E1} as desired.

\end{proof}

Our final result is an illustration of what we are calling {\it boundary layer indifference} of the $\alpha \to 0$ limit. We consider $\Omega$
the infinite channel $\{0 < x_2 < 1, x_1 \in \mathbb{R}\}$, and we seek stationary  solutions of the Euler-$\alpha$ system of the form $u(x_1,x_2) = (\varphi(x_2),0)$,
known as {\it parallel flows}.

For the sake of comparison, let us first consider the Navier-Stokes equations, with viscosity $\nu > 0$ in a channel, with no-slip boundary conditions.
If we seek (stationary) parallel flow solutions for the Navier-Stokes equations in the context above, it is well-known that $\varphi$ must be the Poiseuille parabolic profile, which, for any viscosity $\nu > 0$, is given by $\varphi(x_2) = c x_2 (1 - x_2)$, for an arbitrary constant $c $. On the other hand, any parallel flow is
a stationary solution of the Euler equations in the channel,  and it is natural to ask which parallel flows are vanishing viscosity limits of stationary viscous flows. In fact,
if one considers $\nu$-dependent families of Poiseuille profiles, the only possible limits as $\nu \to 0$ are again of the form $c x_2 (1-x_2)$ (see the Prandtl-Batchelor Theorem, for example, in \cite{acheson} for a more thorough discussion of this issue).

The contrast of this rigid behavior with what happens with the Euler-$\alpha$ regularization is quite striking, as can be seen by the following result:

\begin{Proposition} Let $\varphi = \varphi(x_2)$ be any function in $C^2((0,1)) \cap C([0,1])$  with $\varphi(0) =\varphi(1)=0$. Then the velocity $u(x_1,x_2) = (\varphi(x_2),0)$  is a stationary solution of the Euler-$\alpha$ system for any $\alpha$, with pressure $$p = - \frac{\varphi^2 - \alpha^2 (\varphi^{\prime})^2}{2}.$$   \end{Proposition}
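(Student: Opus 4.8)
The plan is to verify directly that $u=(\varphi(x_2),0)$ satisfies the stationary Euler-$\alpha$ equations; that is, setting $\partial_t v^\alpha=0$ in \eqref{2dalphaeuler}, I would show that the remaining nonlinear terms reduce to a pure gradient, which is then absorbed into the pressure. First I would record the elementary structural facts: since $u$ has vanishing second component and its first component depends only on $x_2$, we have $\D u=\partial_{x_1}\varphi(x_2)=0$, so $u$ is divergence-free, while the hypothesis $\varphi(0)=\varphi(1)=0$ gives $u=0$ on the channel walls $x_2\in\{0,1\}$. The regularity $\varphi\in C^2((0,1))\cap C([0,1])$ is precisely what is needed for the quantities below to be defined in the interior.

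Next I would compute $v=u-\alpha^2\Delta u$. Because $\varphi=\varphi(x_2)$, one has $\Delta u=(\varphi''(x_2),0)$, whence $v=(\psi,0)$ with $\psi:=\varphi-\alpha^2\varphi''$. The two nonlinear terms are then evaluated separately. Since $u\cdot\nabla=\varphi\,\partial_{x_1}$ and $v$ is independent of $x_1$, the convective term vanishes identically: $u\cdot\nabla v=0$. For the stretching term, only the $j=1$ contribution survives (as $v_2=0$ and $u_2=0$), giving $\sum_j v_j\nabla u_j=\psi\,\nabla\varphi=(0,\psi\varphi')$.

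The key observation, and the crux of the statement, is that this surviving vertical component is an exact derivative in $x_2$:
\[
\psi\,\varphi'=(\varphi-\alpha^2\varphi'')\varphi'=\partial_{x_2}\!\left[\frac{\varphi^2-\alpha^2(\varphi')^2}{2}\right].
\]
Hence $u\cdot\nabla v+\sum_j v_j\nabla u_j=\nabla\big[(\varphi^2-\alpha^2(\varphi')^2)/2\big]$, and this equals $-\nabla p^\alpha$ exactly when $p=-(\varphi^2-\alpha^2(\varphi')^2)/2$, as claimed. This closes the verification.

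I do not anticipate any genuine obstacle: the content is a short computation. The point worth emphasizing is conceptual rather than technical, namely that the nonlinearity leaves behind only a term that is a perfect gradient in $x_2$, so it is absorbed into the pressure irrespective of the shape of $\varphi$. This is exactly why \emph{every} admissible profile produces a stationary solution, in sharp contrast with the rigid Poiseuille constraint imposed by the viscous dynamics discussed just before the statement.
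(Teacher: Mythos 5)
Your proposal is correct and follows essentially the same route as the paper: a direct substitution of $u=(\varphi(x_2),0)$ into the stationary Euler-$\alpha$ system, observing that the only surviving nonlinear contribution is the vertical term $(\varphi-\alpha^2\varphi'')\varphi'$, which is the exact $x_2$-derivative of $\tfrac{1}{2}\left(\varphi^2-\alpha^2(\varphi')^2\right)$ and is thus absorbed into the pressure. Your write-up merely organizes the same computation more explicitly (computing $v$ and separating the convective and stretching terms), which matches the paper's identification of the horizontal balance $-\partial_{x_1}p=0$ and the vertical balance $(\varphi-\alpha^2\varphi'')\varphi'=-\partial_{x_2}p$.
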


\begin{proof}
The two-dimensional stationary Euler-$\alpha$ system can be written in the form:
$$u \cdot \nabla ( u- \alpha^2 \Delta u) + \sum_j (u_j - \alpha^2 \Delta u_j) \nabla u_j = -\nabla p; \D u = 0$$
Setting $u = (\varphi(x_2),0)$, the divergence free condition is automatically satisfied, the horizontal momentum balance becomes
$-\partial_{x_1} p = 0$ and the vertical momentum balance equation becomes:
\[(\varphi - \alpha^2 \varphi^{\prime\prime})\varphi^{\prime}  = -\partial_{x_2} p, \]
so, taking $p$ as stated concludes the proof.
\end{proof}

As a consequence of this result, any parallel flow in the channel can be approximated in the $L^2-$norm, by stationary Euler-$\alpha$ solutions through the use of a cut-off function near the boundary of the channel, and by adjusting the pressure accordingly. The resulting boundary layer is of arbitrary width and profile.
This suggests that the hypothesis \eqref{E1} on the initial approximation could be a technical limitation of our proof, and not a sharp requirement.

There are many natural questions arising from the work we have presented. First, one may seek extensions to the three-dimensional case, the case of the second-grade fluid equation, and the case of other regularized models such as Leray-$\alpha$ and the Euler-Voigt-$\alpha$ models - a subject of a current research \cite{LNTZ}. Second, one may seek to optimize the regularity requirement on initial data, improve the space where convergence is taking place and find more precise estimates on error terms. Yet another avenue of investigation would be to examine the behavior of numerical approximations or implementations of $\alpha$-models with small $\alpha$ in domains with boundary. Finally, one may look for better understanding of the boundary layer, specially in time-dependent cases.\\

\noindent{\bf Acknowledgements}

E.S.T. would like to acknowledge the kind hospitality of the Universidade Federal do Rio de Janeiro (UFRJ) and Instituto Nacional de Matem\' {a}tica  Pura  e Aplicada (IMPA), where part of this work was completed.
The work of M.C.L.F. is partially supported by CNPq grant \# 303089 / 2010-5.
The work of H.J.N.L. is supported in part by CNPq grant \# 306331 / 2010-1 and FAPERJ grant \# E-26/103.197/2012.
The work of  E.S.T.  is supported in part by the NSF grants  DMS-1009950, DMS-1109640 and DMS-1109645, as well as by the Minerva Stiftung/Foundation. Also by CNPq-CsF grant \# 401615/2012-0, through the program Ci\^encia sem Fronteiras.
The work of A.B.Z. is supported in part by the CNPq-CsF grant \# 402694/2012-0, by the National Natural Science Foundation of China (11201411) and Jiangxi Provincial Natural Science Foundation of China (20122BAB211004), Higher Education  Teacher Training Foundation of Jiangxi Provincial Education Department  and Youth Innovation Group of Applied Mathematics  in Yichun University.

\end{document}